\pgfplotsset{compat=1.16}
\newtheorem{theorem}{Theorem}[section]
\newtheorem{proposition}[theorem]{Proposition}
\newtheorem{lemma}[theorem]{Lemma}
\newtheorem{remark}[theorem]{Remark}
\newtheorem{assumption}[theorem]{Assumption}
\def\section{\@startsection {section}{1}{\z@}{3.25ex plus 1ex minus
		.2ex}{1.5ex plus .2ex}{\large\bf}}
\def\subsection{\@startsection{subsection}{2}{\z@}{3.25ex plus 1ex minus
		.2ex}{1.5ex plus .2ex}{\normalsize\bf}}
\title{Probabilistic derivation and analysis of the chemical diffusion master equation with mutual annihilation}
\author{Alberto Lanconelli\thanks{Dipartimento di Scienze Statistiche Paolo Fortunati, Università di Bologna, Bologna, Italy. \textbf{e-mail}: alberto.lanconelli2@unibo.it} \and Berk Tan Perçin \thanks{Dipartimento di Scienze Statistiche Paolo Fortunati, Università di Bologna, Bologna, Italy. \textbf{e-mail}: berktan.perçin2@unibo.it} }
\date{\today}
\begin{document}
	
	\maketitle
	
	\bigskip
	
	\begin{abstract}
		We propose a probabilistic derivation of the so-called chemical diffusion master equation (CDME) and describe an infinite dimensional moment generating function method for finding its analytical solution. CDMEs model by means of an infinite system of coupled Fokker-Planck equations the probabilistic evolution of chemical reaction kinetics associated with spatial diffusion of individual particles; here, we focus an creation and mutual annihilation chemical reactions combined with Brownian diffusion of the single particles. Our probabilistic approach mimics the derivation of backward Kolmogorov equations for birth-death continuous time Markov chains. Moreover, the proposed infinite dimensional moment generating function method links certain finite dimensional projections of the solution of the CDME to the solution of a single linear fourth order partial differential equation containing as many variables as the dimension of the aforementioned projection space.
	\end{abstract}
	
	Key words and phrases: Particle-based reaction-diffusion models, Backward Kolmogorov equations, Brownian motion, Malliavin calculus. \\
	
	AMS 2000 classification: 60H07; 60H30; 92E20.
	
	\allowdisplaybreaks
	
	\section{Introduction}\label{intro}
	
	We consider a system of indistinguishable molecules of a chemical species $S$ which undergo 
	\begin{itemize}
		\item drift-less isotropic \emph{diffusion} in the interval $[0,1]$;
		\item \emph{creation} and \emph{mutual annihilation} chemical reactions
		\begin{equation*}
			\mbox{(I)}\quad \varnothing\xrightarrow{\lambda_c(x)}S\quad\quad\quad\mbox{(II)}\quad S+S\xrightarrow{\lambda_d(x,y)}\varnothing.
		\end{equation*}
	\end{itemize} 
Here, the function $[0,1]\ni x\mapsto\lambda_c(x)$ represents the stochastic rate function for reaction (I); it can be thought of being of the form $\lambda_c(x)=\gamma\pi_c(x)$ with $\gamma$ being a positive constant representing the probability per unit of time for a new particle to be created while $\pi_c$ is a probability density on $[0,1]$ which describes the random location for the birth of the new particle. Similarly, the function $[0,1]^2\ni (x,y)\mapsto\lambda_d(x,y)$ is the stochastic rate function for reaction (II) to occur between two particles located at $(x,y)$; for instance, when $\lambda_d$ is constant then the location of the two particles is not relevant for reaction (II) to take place; on the contrary, if $\lambda_d(x,y)=\delta(x-y)$ (here $\delta$ stands for the Dirac delta function with mass at zero) then reaction (II) occurs (with rate one) only for particles having the same location. \\
To analyze the probabilistic evolution of such system the authors in \cite{delRazo} (see also \cite{delRazo2} for a further discussion of the model) proposed a set of equations which describe how the number of molecules and their positions change with time. Namely, for $t\geq 0$, $n\geq 1$ and $A\in\mathcal{B}([0,1]^n)$ they set
	\begin{align*}
		\mathcal{N}(t)&:=\mbox{ number of molecules at time $t$},\\
		\rho_0(t)&:=\mathbb{P}(\mathcal{N}(t)=0),\\
		\int_{A}\rho_n(t,x_1,...,x_n)dx_1\cdot\cdot\cdot dx_n&:=\mathbb{P}\left(\{\mathcal{N}(t)=n\}\cap\{(X_1(t),...,X_n(t))\in A\}\right);
	\end{align*}
here $(X_1(t),...,X_n(t))$ is the vector collecting the positions at time $t$ of the $n$ particles constituting the system. Then, they write the following infinite system of equations: 
	\begin{align}\label{equation}
		\begin{cases}
			\begin{split}
				\partial_t\rho_n(t,x_1,...,x_n)=&\sum_{i=1}^n\partial^2_{x_i}\rho_n(t,x_1,...,x_n)\\
				&+\frac{(n+2)(n+1)}{2}\int_{[0,1]^2}\lambda_d(x,y)\rho_{n+2}(t,x_1,...,x_n,x,y)dxdy\\
				&-\sum_{i<j}\lambda_d(x_i,x_j)\cdot\rho_n(t,x_1,...,x_n)\\
				&+\frac{1}{n}\sum_{i=1}^n\lambda_c(x_i)\rho_{n-1}(t,x_1,...,x_{i-1},x_{i+1},...,x_n)\\
				&-\int_{[0,1]}\lambda_c(y)dy\cdot\rho_n(t,x_1,...,x_n),\quad\quad n\geq 0, t>0, (x_1,...,x_n)\in [0,1]^n,
			\end{split}
		\end{cases}
	\end{align}
	where we agree on assigning value zero to the three sums above when $n=0$. The term
	\begin{align*}
		\sum_{i=1}^n\partial^2_{x_i}\rho_n(t,x_1,...,x_n)
	\end{align*}
	in \eqref{equation} refers to spatial diffusion of the particles; the terms
	\begin{align*}
	\frac{(n+2)(n+1)}{2}\int_{[0,1]^2}\lambda_d(x,y)\rho_{n+2}(t,x_1,...,x_n,x,y)dxdy
	\end{align*}
and
	\begin{align*}
	\sum_{i<j}\lambda_d(x_i,x_j)\cdot\rho_n(t,x_1,...,x_n)
	\end{align*}
	formalize gain and loss, respectively, due to reaction (II), while
	\begin{align*}
		\frac{1}{n}\sum_{i=1}^n\lambda_c(x_i)\rho_{n-1}(t,x_1,...,x_{i-1},x_{i+1},...,x_n)
	\end{align*}
and
\begin{align*}	
		-\int_{[0,1]}\lambda_c(y)dy\cdot\rho_n(t,x_1,...,x_n)
	\end{align*}
	represent gain and loss, respectively, associated to reaction (I). System \eqref{equation} is combined with initial and Neumann boundary conditions 
	\begin{align}\label{initial}
		\begin{cases}
			\begin{split}
				\rho_0(0)&=1;\\
				\rho_n(0,x_1,...,x_n)&=0,\quad n\geq 1, (x_1,...,x_n)\in [0,1]^n;\\
				\partial_{\nu}\rho_n(t,x_1,...,x_n)&=0,\quad n\geq 1, t\geq 0, (x_1,...,x_n)\in\partial [0,1]^n.
			\end{split}
		\end{cases}
	\end{align}
	The initial condition (first two equations in \eqref{initial}) states that there are no molecules in the system at time zero while the Neumann condition prevents flux through the boundary of $[0,1]$, thus forcing the diffusion of the molecules inside $[0,1]$. The symbol $\partial_{\nu}$ in \eqref{initial} stands for the directional derivative along the outer normal vector at the boundary of $[0,1]^n$.

\subsection{Literature review}
	
The dynamics of biochemical processes in living cells are commonly understood as an interplay between the spatial transport (diffusion) of molecules and their chemical kinetics (reaction), both of which are inherently stochastic at the molecular scale. In the case of systems with small molecule numbers in spatially well-mixed settings, the diffusion is averaged out and the probabilistic dynamics are governed by the well-known chemical master equation (CME) \cite{gillespie1977exact,qian2010chemical, qian2021stochastic}. The CME can be seldom solved analytically \cite{jahnke2007solving}. However, solving a few simple cases analytically can bring valuable insight to the solutions of more complex cases. Alternatively, one can solve it by integrating stochastic trajectories with the Gillespie or tau-leap algorithms \cite{anderson2015stochastic, gillespie1977exact}, by approximation methods \cite{deuflhard2008adaptive,engblom2009spectral,munsky2006finite,schnoerr2017approximation} or even by deep learning approaches \cite{gupta2021deepcme, jiang2021neural}.

In the case of spatially inhomogeneous systems, where diffusion is not averaged out, one would expect to obtain a similar master equation. However, obtaining such an equation is plagued with mathematical difficulties, and although it was hinted in previous work \cite{doi1976second} and formulated for some specific systems \cite{schweitzer2003brownian}, it was not until recently that this was formalized into the so-called chemical diffusion master equation (CDME) \cite{delRazo,delRazo2}. The CDME changes a few paradigms that have not yet been explored thoroughly in stochastic chemical kinetics models. It combines continuous and discrete degrees of freedom, and it models reaction and diffusion as a joint stochastic process. It consists of an infinite sorted family of Fokker-Planck equations, where each level of the sorted family corresponds to a certain number of particles/molecules. The equations at each level describe the spatial diffusion of the corresponding set of particles, and they are coupled to each other via reaction operators, which change the number of particles in the system. The CDME is the theoretical backbone of reaction-diffusion processes, and thus, it is fundamental to model and understand biochemical processes in living cells, as well as to develop multiscale numerical methods \cite{del2018grand,flegg2012two,kostre2021coupling,smith2018spatially} and hybrid algorithms \cite{chen2014brownian,dibak2018msm,del2021multiscale}. The stochastic trajectories of the CDME can be often integrated using particle--based reaction--diffusion simulations \cite{ andrews2017smoldyn, hoffmann2019readdy}. 

The problem of finding analytical solutions to some CDMEs has been recently addressed in the papers \cite{Lanconelli2023} and \cite{bdCDME}. In \cite{Lanconelli2023} the author proposed an infinite dimensional version of the classical moment generating function method, which is commonly utilized to find analytical solution to some CMEs \cite{Lecca},\cite{EC},\cite{McQuarrie} (see also \cite{Zhang2005}). In \cite{Lanconelli2023} the method is employed to solve a CDME of birth-death type; this approach has been further explored in \cite{bdCDME} and an explicit solution for the general birth-death CDME is presented.

\subsection{Aim of the paper and organization of the material}

The aim of this paper is twofold. First, in Section 2 we propose a rigorous probabilistic derivation of equation \eqref{equation}. The idea is to mimic the procedure for obtaining the backward Kolmogorov equation of a birth-death continuous time Markov chain: see for instance \cite{KT} for details. To this aim we will fix a set of transition probabilities (see Assumption \ref{assumption transitions} below) and derive through a limit argument the desired equation. We remark that our approach can be readily generalized to include higher order chemical reactions and more complex descriptions of the diffusive motion of the particles (i.e. anisotropic diffusion with drift). Then, in Section 3 we employ the general method proposed in \cite{Lanconelli2023} to analytically solve equation \eqref{equation}-\eqref{initial}. This requires the use of Gaussian Malliavin calculus's techniques (summarized in the Appendix below) and provides a link between the solution to \eqref{equation}-\eqref{initial} and the solution of a single fourth order linear PDE which describes certain finite dimensional projections of the solution to the original problem. At the end, some comments on the Guassian features introduced in our problem by the proposed approach are also discussed. 
	
\section{Probabilistic derivation of equation \eqref{equation} as a backward Kolmogorov equation}\label{Section 2}
	
	In this section we present a probabilistic derivation of equation \eqref{equation}. We recall that the particles of the system under investigation are subject to the chemical reactions
	\begin{align}\label{CR}
	\mbox{(I)}\quad \varnothing\xrightarrow{\lambda_c(x)}S\quad\quad\quad\mbox{(II)}\quad S+S\xrightarrow{\lambda_d(x,y)}\varnothing,
	\end{align}
	and diffuse in space as independent Brownian motions between successive reactions.\\
	In the sequel we will be dealing with probabilities of the form $\mathbb{P}(\mathcal{N}(t)=n,X(t)\in A)$: this represents the probability that the system at time $t$ is made of $n$ many particles and that such particles  are located in the region $A$. We are not going to use an extra index in $X(t)$ to stress that it is an $n$-dimensional vector; this vector will always come with an event of the type $\{\mathcal{N}(t)=n\}$ and hence the number of components of $X(t)$ will be uniquely determined.
	
	We begin with a couple of technical assumptions.  
	\begin{assumption}\label{density}
	For any $n\geq 1$ and $t>0$ there exists a symmetric function $\rho_n(t,x_1,...,x_n)$ such that
	\begin{align*}
		\mathbb{P}(\mathcal{N}(t)=n,X(t)\in A)=\int_A\rho_n(t,x_1,...,x_n)dx_1\cdot\cdot\cdot dx_n;
	\end{align*}
we also set 
\begin{align*}
\rho_0(t):=\mathbb{P}(\mathcal{N}(t)=0).
\end{align*} 
\end{assumption}
Notice that the symmetry of the functions $\rho_n$ models the indistinguishability of the particles in the system; moreover, by construction the sequence $\{\rho_n\}_{n\geq 0}$ fulfils the constraint
\begin{align}\label{constraint}
	\sum_{n\geq 0}\int_{[0,1]^n}\rho_n(t,x_1,...,x_n)dx_1\cdot\cdot\cdot dx_n=1.
\end{align}

\begin{assumption}\label{lambda}
	The functions $\lambda_c$ and $\lambda_d$ appearing in \eqref{CR} are non negative, bounded and continuous. Moreover, $\lambda_d(x,y)=\lambda_d(y,x)$ for all $x,y\in [0,1]$.
\end{assumption}

We are now ready to describe the probabilistic structure to be imposed on our system for the formal derivation of equation \eqref{equation}.  
	
	\begin{assumption}\label{assumption transitions}
		The system under investigation possesses the following properties:
		\begin{itemize}
		\item \emph{Diffusion of particles}: in absence of chemical reactions, particles diffuse in $[0,1]$ like independent Brownian motions with variance $2t$ and reflecting boundary conditions; more precisely, the transition density $\{p_t(x|y)\}_{t\geq 0,x,y\in [0,1]^n}$ for the motion of $n$ many particles solves 
		\begin{align*}
		\begin{cases}
		\partial_tu(t,x|y)=\sum_{j=1}^n\partial^2_{x_i}u(t,x|y),& t>0,x,y\in [0,1]^n;\\
		u(0,x|y)=\delta_y(x),& x,y\in [0,1]^n;\\
		\partial_{\nu}u(t,x|y)=0,& t\geq 0, x\in\partial[0,1]^n,y\in [0,1]^n.
		\end{cases}
		\end{align*} 
		\item \emph{Reaction (I) + diffusion of particles}: for any $n\geq 1$, $A\in\mathcal{B}([0,1]^n)$ and $t,h>0$ we have
		\begin{align}\label{C}
			&\mathbb{P}(\mathcal{N}(t+h)=n,X(t+h)\in A|\mathcal{N}(t)=n-1,X(t)=y)\nonumber\\
			&\quad\quad=h\frac{1}{n}\sum_{j=1}^{n}\int_A\int_{[0,1]}\mathtt{p}_h(x|y\cup_j z)\lambda_c(z)dzdx+\mathcal{O}(h^2),
		\end{align}
		with $y\cup_j z:=(y_1,...,y_{j-1},z,y_{j+1},...,y_{n})\in[0,1]^n$. To explain the contribution of each single term of the identity above, we imagine to split the function $\lambda_c$ as $\gamma\cdot\pi_d$ where $\gamma:=\int_{[0,1]}\lambda_c(z)dz$ while $\pi_c$ is a probability density function supported on $[0,1]$. The chemical reaction (I) adds a new particle, here denoted with $z$, to the system: the rate at which this happens is $\gamma$ while the location for the birth of the particle is distributed according to $\pi_d$. Once the creation takes place, the $n$ particles of the system diffuse in space and this is formalized through the expression $\int_A\int_{[0,1]}\mathtt{p}_h(x|y\cup_j z)\lambda_c(z)dzdx$. Then, to make particles indistinguishable we symmetrize over the possible positions of $z$ in the vector $y\cup_j z$.
		\item \emph{Reaction (II) + diffusion of particles}: for any $n\geq 0$, $A\in\mathcal{B}([0,1]^n)$ and $t,h>0$ we have
		\begin{align}\label{MA}
			&\mathbb{P}(\mathcal{N}(t+h)=n,X(t+h)\in A|\mathcal{N}(t)=n+2,X(t)=y)\nonumber\\
			&\quad\quad=h\sum_{j<k}\lambda_d(y_j,y_k)\int_A\mathtt{p}_h(x|\hat{y}_{j,k})dx+\mathcal{O}(h^2),
		\end{align}
		with $\hat{y}_{j,k}:=(y_1,...,y_{j-1},y_{j+1},...,y_{k-1},y_{k+1},...,y_n)\in [0,1]^n$. The chemical reaction (II) removes two particles from the system while the others diffuse: this is the contribution of $\int_A\mathtt{p}_h(x|\hat{y}_{j,k})dx$ where the particles labelled $j$ and $k$ are those undergoing the chemical reaction. We have then to mediate this over all the possible couples of particles in the system: the weights of this average, represented by the sum above, are provided by $\lambda_d$ which measures the likelihood for two particles to react depending on their locations. We also mention that for $n=0$ the right hand side of \eqref{MA} simplifies to $h\lambda_d(x_1,x_2)+\mathcal{O}(h^2)$. 
		\item \emph{No reactions + diffusion of particles}: for any $n\geq 1$, $A\in\mathcal{B}([0,1]^n)$ and $t,h>0$ we have
		\begin{align}\label{NR}
		&\mathbb{P}(\mathcal{N}(t+h)=n,X(t+h)\in A|\mathcal{N}(t)=n,X(t)=y)\nonumber\\
		&\quad\quad=\left(1-h\int_{[0,1]}\lambda_c(z)dz-h\sum_{j< k}\lambda_d(y_j,y_k)\right)\int_A\mathtt{p}_h(x|y)dx+\mathcal{O}(h^2).
		\end{align}
		The term inside parenthesis reflects the probability of no reaction happening while the integral formalizes the diffusion of particles.
		\item \emph{Multiple reactions}: for any $n\geq 1$, $A\in\mathcal{B}([0,1]^n)$ and $t,h>0$ we have
		\begin{align}\label{MR}
		\mathbb{P}(\mathcal{N}(t+h)=n,X(t+h)\in A|\mathcal{N}(t)=k,X(t)=y)=\mathcal{O}(h^2),
		\end{align}
		whenever $k\notin\{n-1,n,n+2\}$.
	\end{itemize}
	\end{assumption}

We now show how to use Assumption \ref{density}-\ref{lambda}-\ref{assumption transitions} to get the CDME \eqref{equation}. Let $n\geq 1$ and $A\in\mathcal{B}([0,1]^n)$; then, according to the law of total probability we can write
	\begin{align}\label{1}
		\int_A\rho_n(t+h,x)dx\nonumber=&\mathbb{P}(\mathcal{N}(t+h)=n,X(t+h)\in A)\nonumber\\
		=&\sum_{k\geq 0}\int_{[0,1]^k}\mathbb{P}(\mathcal{N}(t+h)=n,X(t+h)\in A|\mathcal{N}(t)=k,X(t)=y)\mathbb{P}(\mathcal{N}(t)=k,X(t)\in dy)\nonumber\\
		=&\sum_{k\geq 0}\int_{[0,1]^k}\mathbb{P}(\mathcal{N}(t+h)=n,X(t+h)\in A|\mathcal{N}(t)=k,X(t)=y)\rho_k(t,y)dy.
	\end{align}
	Notice that for $k=0$ the corresponding term in the sum above should be interpreted as
	\begin{align*}
		\mathbb{P}(\mathcal{N}(t+h)=n,X(t+h)\in A|\mathcal{N}(t)=0)\rho_0(t).
	\end{align*}
Now, in view of Assumption \ref{assumption transitions} the only transitions of order one in $h$ are those with $k=n+2$, $k=n-1$ and $k=n$ while the others are of order at least two; therefore, we can rewrite \eqref{1} as
\begin{align}\label{2}
	\int_A\rho_n(t+h,x)dx=&\sum_{k\geq 0}\int_{[0,1]^k}\mathbb{P}(\mathcal{N}(t+h)=n,X(t+h)\in A|\mathcal{N}(t)=k,X(t)=y)\rho_k(t,y)dy\nonumber\\
	=&\int_{[0,1]^{n+2}}\mathbb{P}(\mathcal{N}(t+h)=n,X(t+h)\in A|\mathcal{N}(t)=n+2,X(t)=y)\rho_{n+2}(t,y)dy\nonumber\\
	&+\int_{[0,1]^{n-1}}\mathbb{P}(\mathcal{N}(t+h)=n,X(t+h)\in A|\mathcal{N}(t)=n-1,X(t)=y)\rho_{n-1}(t,y)dy\nonumber\\
	&+\int_{[0,1]^n}\mathbb{P}(\mathcal{N}(t+h)=n,X(t+h)\in A|\mathcal{N}(t)=n,X(t)=y)\rho_n(t,y)dy\nonumber\\
	&+\mathcal{O}(h^2)\nonumber\\
	=&\int_{[0,1]^{n+2}}\left(\int_Ah\sum_{j<k}\lambda_d(y_j,y_k)\mathtt{p}_h(x|\hat{y}_{j,k})dx\right)\rho_{n+2}(t,y)dy\nonumber\\
	&+\int_{[0,1]^{n-1}}\left(\int_A\frac{h}{n}\sum_{j=1}^{n}\int_{[0,1]}\lambda_c(z)\mathtt{p}_h(x|y\cup_j z)dzdx\right)\rho_{n-1}(t,y)dy\nonumber\\
	&+\int_{[0,1]^{n}}\left(\int_A\left(1-h\int_{[0,1]}\lambda_c(z)dz-h\sum_{j<k}\lambda_d(y_j,y_k)\right)\mathtt{p}_h(x|y)dx\right)\rho_n(t,y)dy\nonumber\\
	&+\mathcal{O}(h^2)\nonumber\\
	=&h\int_A\left(\sum_{j<k}\int_{[0,1]^{n+2}}\lambda_d(y_j,y_k)\mathtt{p}_h(x|\hat{y}_{j,k})\rho_{n+2}(t,y)dy\right)dx\nonumber\\
	&+h\int_A\left(\frac{1}{n}\sum_{j=1}^{n}\int_{[0,1]^{n-1}}\int_{[0,1]}\lambda_c(z)\mathtt{p}_h(x|y\cup_j z)\rho_{n-1}(t,y)dzdy\right)dx\nonumber\\
	&+\int_A\left(\int_{[0,1]^{n}}\left(1-h\int_{[0,1]}\lambda_c(z)dz-h\sum_{j<k}\lambda_d(y_j,y_k)\right)\mathtt{p}_h(x|y)\rho_n(t,y)dy\right)dx\nonumber\\
	&+\mathcal{O}(h^2)\nonumber\\
	=&h\int_A\left(\sum_{j<k}\int_{[0,1]^{2}}\lambda_d(y_j,y_k)\left(\int_{[0,1]^{n}}\mathtt{p}_h(x|\hat{y}_{j,k})\rho_{n+2}(t,y)d\hat{y}_{j,k}\right)dy_jdy_k\right)dx\nonumber\\
	&+h\int_A\left(\frac{1}{n}\sum_{j=1}^{n}\int_{[0,1]^{n-1}}\int_{[0,1]}\lambda_c(z)\mathtt{p}_h(x|y\cup_j z)\rho_{n-1}(t,y)dzdy\right)dx\\
	&+\int_A\left(\int_{[0,1]^{n}}\left(1-h\int_{[0,1]}\lambda_c(z)dz-h\sum_{j< k}\lambda_d(y_j,y_k)\right)\mathtt{p}_h(x|y)\rho_n(t,y)dy\right)dx\nonumber\\
	&+\mathcal{O}(h^2)\nonumber.			
\end{align}
	To ease the notation we now introduce the following:
	\begin{align*}
		\mathtt{T}_hf(x):=\int_{[0,1]^n}\mathtt{p}_h(x|y)f(y)dy,\quad f\in C_0([0,1]^n),
	\end{align*}
	and recall that for suitably regular $f$ we have
	\begin{align}\label{delta}
	\lim_{h\to 0}\mathtt{T}_hf(x)=f(x),\quad x\in[0,1]^n,
	\end{align}
	and
	\begin{align}\label{delta 2}
	\lim_{h\to 0}\frac{\mathtt{T}_hf(x)-f(x)}{h}=\sum_{i=1}^n\partial_{x_i}^2f(x)\quad x\in[0,1]^n;
	\end{align}
	we refer to \cite{Kallenberg} for a precise formulation of those statements. With this notation at hand we can rewrite \eqref{2} as 
\begin{align*}
\int_A\rho_n(t+h,x)dx=&h\int_A\left(\sum_{j<k}\int_{[0,1]^{2}}\lambda_d(y_j,y_k)(\mathtt{T}_h\rho_{n+2}(t,\cdot,y_j,y_k))(x)dy_jdy_k\right)dx\\
&+h\int_A\frac{1}{n}\sum_{j=1}^{n}\mathtt{T}_h(\lambda_c\otimes_j\rho_{n-1}(t,\cdot))(x)dx\\
&+\int_A(\mathtt{T}_h\rho_n(t,\cdot))(x)dx-h\int_{[0,1]}\lambda_c(z)dz\int_A(\mathtt{T}_h\rho_n(t,\cdot))(x)dx\\
&-h\sum_{j< k}(\mathtt{T}_h\lambda_d(\cdot_j,\cdot_k)\rho_n(t,\cdot))(x)dx+\mathcal{O}(h^2).
\end{align*}	
We now subtract the quantity $\int_A\rho_n(t,x)dx$ from both sides of the last equality, divide by $h$ and take the limit as $h$ tends to zero. This gives
\begin{align*}
\int_A\partial_t\rho_n(t,x)dx=&\lim_{h\to 0}\int_A\frac{\rho_n(t+h,x)-\rho_n(t,x)}{h}dx\\
=&\lim_{h\to 0}\int_A\left(\sum_{j<k}\int_{[0,1]^{2}}\lambda_d(y_j,y_k)(\mathtt{T}_h\rho_{n+2}(t,\cdot,y_j,y_k))(x)dy_jdy_k\right)dx\\
&+\lim_{h\to 0}\int_A\frac{1}{n}\sum_{j=1}^{n}\mathtt{T}_h(\lambda_c\otimes_j\rho_{n-1}(t,\cdot))(x)dx\\
&+\lim_{h\to 0}\int_A\frac{(\mathtt{T}_h\rho_n(t,\cdot))(x)-\rho_n(t,x)}{h}dx-\lim_{h\to 0}\int_{[0,1]}\lambda_c(z)dz\int_A(\mathtt{T}_h\rho_n(t,\cdot))(x)dx\\
&-\lim_{h\to 0}\int_A\sum_{j< k}(\mathtt{T}_h\lambda_d(\cdot_j,\cdot_k)\rho_n(t,\cdot))(x)dx.
\end{align*} 	
Now, using \eqref{delta} we get
\begin{align*}
	&\lim_{h\to 0}\int_A\left(\sum_{j<k}\int_{[0,1]^{2}}\lambda_d(y_j,y_k)(\mathtt{T}_h\rho_{n+2}(t,\cdot,y_j,y_k))(x)dy_jdy_k\right)dx\\
	&\quad\quad=\int_A\left(\sum_{j<k}\int_{[0,1]^{2}}\lambda_d(y_j,y_k)\rho_{n+2}(t,x,y_j,y_k)dy_jdy_k\right)dx\\
	&\quad\quad=\int_A\left(\frac{(n+2)(n+1)}{2}\int_{[0,1]^{2}}\lambda_d(y_j,y_k)\rho_{n+2}(t,x,y_j,y_k)dy_jdy_k\right)dx,
\end{align*}	
and
\begin{align*}
	\lim_{h\to 0}\int_A\frac{1}{n}\sum_{j=1}^{n}\mathtt{T}_h(\lambda_c\otimes_j\rho_{n-1}(t,\cdot))(x)dx=\int_A\left(\frac{1}{n}\sum_{j=1}^{n}\lambda_c(x_j)\rho_{n-1}(t,x_1,...,x_{j-1},x_{j+1},...,x_n)\right)dx.
\end{align*}
Moreover, formula \eqref{delta 2} yields
\begin{align*}
\lim_{h\to 0}\int_A\frac{(\mathtt{T}_h\rho_n(t,\cdot))(x)-\rho_n(t,x)}{h}dx=\int_A\left(\sum_{i=1}^n\partial_{x_i}^2\rho_n(t,x)\right)dx,
\end{align*}
while formula \eqref{delta} gives 
\begin{align*}
\lim_{h\to 0}\int_{[0,1]}\lambda_c(z)dz\int_A(\mathtt{T}_h\rho_n(t,\cdot))(x)dx=\int_{[0,1]}\lambda_c(z)dz\int_A\rho_n(t,x)dx.
\end{align*}	
and
\begin{align*}
	\lim_{h\to 0}\int_A\sum_{j< k}(\mathtt{T}_h\lambda_d(\cdot_j,\cdot_k)\rho_n(t,\cdot))(x)dx=\int_A\sum_{j< k}\lambda_d(x_j,x_k)\rho_n(t,x)dx.
\end{align*}
If we combine all the preceding equalities we can conclude that
\begin{align*}
	\int_A\partial_t\rho_n(t,x)dx=&\int_A\left(\frac{(n+2)(n+1)}{2}\int_{[0,1]^{2}}\lambda_d(y_j,y_k)\rho_{n+2}(t,x,y_j,y_k)dy_jdy_k\right)dx\\
	&+\int_A\left(\frac{1}{n}\sum_{j=1}^{n}\lambda_c(x_j)\rho_{n-1}(t,x_1,...,x_{j-1},x_{j+1},...,x_n)\right)dx\\
	&+\int_A\left(\sum_{i=1}^n\partial_{x_i}^2\rho_n(t,x)\right)dx\\
	&-\int_{[0,1]}\lambda_c(z)dz\int_A\rho_n(t,x)dx\\
	&-\int_A\sum_{j< k}\lambda_d(x_j,x_k)\rho_n(t,x)dx.
\end{align*} 
Since $A\in\mathcal{B}([0,1]^n)$ is arbitrary, this is equivalent to \eqref{equation}.
	
\section{Analysis of equation \eqref{equation}-\eqref{initial} through an infinite dimensional generating function method}
	
	In this section we employ the general method proposed in \cite{Lanconelli2023} to solve analytically the CDME \eqref{equation}-\eqref{initial}. We mention that this method has lead to an explicit representation for the solution to the general birth-death CDME \cite{bdCDME}. For the application of that approach in the current framework we need to impose the following technical condition.
	
	\begin{assumption}\label{constant}
		The function $\lambda_d:[0,1]^2\to[0,+\infty[$ is constant; this means that equation \eqref{equation} simplifies to
\begin{align}\label{equation2}
	\begin{cases}
		\begin{split}
			\partial_t\rho_n(t,x_1,...,x_n)=&\sum_{i=1}^n\partial^2_{x_i}\rho_n(t,x_1,...,x_n)\\
			&+\lambda_d\frac{(n+2)(n+1)}{2}\int_{[0,1]^2}\rho_{n+2}(t,x_1,...,x_n,x,y)dxdy\\
			&-\lambda_d\frac{n(n-1)}{2}\rho_n(t,x_1,...,x_n)\\
			&+\frac{1}{n}\sum_{i=1}^n\lambda_c(x_i)\rho_{n-1}(t,x_1,...,x_{i-1},x_{i+1},...,x_n)\\
			&-\gamma\rho_n(t,x_1,...,x_n),\quad\quad\quad n\geq 0, t>0, (x_1,...,x_n)\in [0,1]^n,
		\end{split}
	\end{cases}
\end{align}
\end{assumption}

The need for such assumption is related to some technical features of the method used to solve \eqref{equation2}. We refer to Remark \ref{commutation bound} below for details. 

\begin{remark}
	As pointed out in \cite{delRazo}, if we integrate equation \eqref{equation2}-\eqref{initial} with respect to all degrees of freedom, then this reduces to the classical CME for the reactions 
	\begin{align*}
		\mbox{(I)}\quad \varnothing\xrightarrow{\gamma}S\quad\quad\quad\mbox{(II)}\quad S+S\xrightarrow{\lambda_d}\varnothing.
	\end{align*}
	In fact, identity 
	\begin{align*}
	\mathbb{P}(\mathcal{N}(t)=n)=\int_{[0,1]^n}\rho_n(t,x_1,...,x_n)dx_1\cdot\cdot\cdot dx_n,
	\end{align*}
(compare with Assumption \ref{density}) together with the boundary conditions in \eqref{initial} yield
	\begin{align}\label{equation book}
		\partial_t\mathbb{P}(\mathcal{N}(t)=n)=&\lambda_d\frac{(n+2)(n+1)}{2}\mathbb{P}(\mathcal{N}(t)=n+2)-\lambda_d\frac{n(n-1)}{2}\mathbb{P}(\mathcal{N}(t)=n)\nonumber\\
		&+\gamma \mathbb{P}(\mathcal{N}(t)=n-1)-\gamma\mathbb{P}(\mathcal{N}(t)=n),
	\end{align} 
which is indeed the desired CME (see equation (1.28) in \cite{EC}: here, the authors employ the generating function method for solving equation \eqref{equation book}).
\end{remark}

The scheme for solving \eqref{equation2}, as presented in \cite{Lanconelli2023}, is made of several steps that we now discuss in the following preparatory results. Before doing that, we introduce the notation $-\mathcal{A}:=\partial_x^2$ and we recall that in the Appendix below one can find a quick review of the Malliavin calculus's tools utilized in the sequel.

	\begin{lemma}\label{infinite}
	If $\{\rho_n\}_{n\geq 0}$ is a classical solution to equation \eqref{equation2}-\eqref{initial}, then 
	\begin{align}\label{Phi}
			\Phi(t):=\sum_{n\geq 0}I_n(\rho_n(t,\cdot))
	\end{align}
solves 
\begin{align}\label{SDE}
	\begin{cases}
		\partial_t\Phi=d\Gamma(-\mathcal{A})\Phi+\frac{\lambda_d}{2}D^2_{1}\Phi-\frac{\lambda_d}{2}\mathtt{N}(\mathtt{N}-\mathtt{I})\Phi+D^{\star}_{\lambda_c}\Phi-\gamma\Phi;\\
		\Phi(0)=1,
	\end{cases}
\end{align}
in $\mathtt{F}^{\star}$.
	\end{lemma}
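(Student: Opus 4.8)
The plan is to verify that $\Phi(t)=\sum_{n\geq 0}I_n(\rho_n(t,\cdot))$ satisfies \eqref{SDE} by expanding each operator on the right-hand side in the Wiener chaos basis and matching, level by level, with the system \eqref{equation2}. First I would note that, since $\{\rho_n\}$ is a classical solution, term-by-term differentiation in $t$ gives $\partial_t\Phi=\sum_{n\geq 0}I_n(\partial_t\rho_n(t,\cdot))$ in $\mathtt{F}^\star$; so it suffices to show that the chaos components of $d\Gamma(-\mathcal A)\Phi+\frac{\lambda_d}{2}D_1^2\Phi-\frac{\lambda_d}{2}\mathtt N(\mathtt N-\mathtt I)\Phi+D^\star_{\lambda_c}\Phi-\gamma\Phi$ reproduce exactly the five groups of terms on the right-hand side of \eqref{equation2}. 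I would then recall (from the Appendix) the defining actions of these operators on a multiple integral $I_n(f_n)$ with $f_n\in L^2([0,1]^n)$ symmetric:

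First, the second quantization operator acts diagonally on chaoses: $d\Gamma(-\mathcal A)I_n(f_n)=I_n\bigl((-\mathcal A)^{\oplus n}f_n\bigr)=I_n\bigl(\sum_{i=1}^n\partial_{x_i}^2 f_n\bigr)$, which matches the diffusion term $\sum_{i=1}^n\partial_{x_i}^2\rho_n$. Second, the number operator gives $\mathtt N I_n(f_n)=n I_n(f_n)$, so $\frac{\lambda_d}{2}\mathtt N(\mathtt N-\mathtt I)I_n(f_n)=\lambda_d\frac{n(n-1)}{2}I_n(f_n)$, matching the loss term for reaction (II), and similarly $\gamma\Phi$ matches the $-\gamma\rho_n$ term trivially. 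Third, the twice-iterated Malliavin derivative evaluated on the diagonal, $D_1^2$, lowers the chaos order by two: $D_{t_1,t_2}I_n(f_n)=n(n-1)I_{n-2}(f_n(\cdot,t_1,t_2))$, and setting $t_1=t_2=1$ (or integrating against the appropriate constant kernel coming from $\lambda_d$ being constant) yields $\frac{\lambda_d}{2}D_1^2 I_{n+2}(\rho_{n+2})=\lambda_d\frac{(n+2)(n+1)}{2}I_n\bigl(\int_{[0,1]^2}\rho_{n+2}(t,\cdot,x,y)\,dx\,dy\bigr)$, matching the gain term for reaction (II). Fourth, the adjoint (Skorohod/divergence-type) operator $D^\star_{\lambda_c}$ raises the chaos order by one: $D^\star_{\lambda_c}I_{n-1}(f_{n-1})=I_n(\lambda_c\widetilde\otimes f_{n-1})$, where $\widetilde\otimes$ is the symmetrized tensor product, and $\lambda_c\widetilde\otimes\rho_{n-1}=\frac1n\sum_{j=1}^n\lambda_c(x_j)\rho_{n-1}(x_1,\dots,x_{j-1},x_{j+1},\dots,x_n)$ is precisely the creation gain term. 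Collecting the coefficient of $I_n(\cdot)$ across all these contributions reproduces \eqref{equation2}; the initial condition $\Phi(0)=\sum_n I_n(\rho_n(0,\cdot))=I_0(1)=1$ follows from \eqref{initial}.

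The main obstacles are analytic rather than algebraic. The first is justifying that $\Phi(t)$ and each operator applied to it actually live in the distribution space $\mathtt F^\star$ (a Hida-type dual of a space of test functionals): one must check summability of the relevant weighted norms of $\{\rho_n(t,\cdot)\}$, which is where the structural constraint \eqref{constraint} and the boundedness in Assumption \ref{lambda} enter, and where the unboundedness of $d\Gamma(-\mathcal A)$ and $\mathtt N(\mathtt N-\mathtt I)$ forces the move to $\mathtt F^\star$ rather than $L^2$ of the Wiener space. The second, and I expect the genuinely delicate point, is the treatment of $D_1^2$: evaluating a Malliavin derivative pointwise on the diagonal is not defined for a general $L^2$ kernel, so one needs the regularity of $\rho_{n+2}$ (continuity, guaranteed for a classical solution together with the continuity hypotheses on the data) and crucially the fact that $\lambda_d$ is \emph{constant} — this is exactly the role of Assumption \ref{constant}, flagged in the paper as Remark \ref{commutation bound} — so that the "evaluation" is really an integration against a constant kernel and the operator is well-behaved on $\mathtt F^\star$. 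Once these mapping-property and domain issues are settled, the identification of \eqref{SDE} with \eqref{equation2} is the routine chaos-coefficient bookkeeping sketched above, and the Neumann boundary conditions are absorbed into the definition of $\mathcal A=-\partial_x^2$ as a self-adjoint operator on $L^2([0,1])$ with reflecting (Neumann) boundary, so that $d\Gamma(-\mathcal A)$ is well-defined.
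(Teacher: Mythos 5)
Your proposal is correct and follows essentially the same route as the paper: term-by-term differentiation in $t$, identification of each operator's action on the $n$-th chaos, and summation over $n$, with the initial condition read off from \eqref{initial}. One small clarification: in the paper's notation $D_1$ is the \emph{directional} Malliavin derivative along the constant function $1\in L^2([0,1])$, i.e. an integration of the kernel over the last variable, so no diagonal evaluation is ever needed and the "delicate point" you flag does not arise — your parenthetical alternative (integrating against the constant kernel) is the intended reading, and it yields exactly the gain term you wrote.
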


\begin{proof}
Let $\{\rho_n\}_{n\geq 0}$ be a classical solution to equation \eqref{equation2}-\eqref{initial}; this means in particular that $\rho_0\in C^1([0,+\infty[)$ and $\rho_n\in C^{1,2}([0,+\infty[\times [0,1]^n)$ for all $n\geq 1$. Recall also that according to Assumption \ref{density}, for any $n\geq 2$ and $t\geq 0$ the function $\rho_n(t,\cdot)$ is symmetric in its arguments. This allows us to consider the multiple It\^o integrals $I_n(\rho_n(t,\cdot))$ and to interchange the partial derivative $\partial_t$ with the iterated integrals. Furthermore, employing the operators $d\Gamma(-\mathcal{A})$, $D_1$, $\mathtt{N}$ and $D_{\lambda_c}^{\star}$, whose definitions and properties can be found in the Appendix below, and equation \eqref{equation2}, we can write for all $n\geq 1$ and $t\geq 0$ that
\begin{align*}
			\partial_tI_n(\rho_n(t,\cdot))=&I_n(\partial_t\rho_n(t,\cdot))\\
			=&d\Gamma(-\mathcal{A})I_n(\rho_n(t,\cdot))+\frac{\lambda_d}{2}D_1^2I_{n+2}(\rho_{n+2}(t,\cdot))-\frac{\lambda_d}{2}\mathtt{N}(\mathtt{N}-\mathtt{I})I_n(\rho_n(t,\cdot))\\
			&+D_{\lambda_c}^{\star}I_{n-1}(\rho_{n-1}(t,\cdot))-\gamma I_n(\rho_n(t,\cdot)).
\end{align*} 
If we now sum over $n\geq 0$ and recall that $\rho_{-1}\equiv 0$ while $D_1^2$ maps to zero any multiple It\^o integral of order less than two, we obtain equation \eqref{SDE} for the stochastic process defined in \eqref{Phi}.
\end{proof}

\begin{remark}
	It is worth to point out that condition \eqref{constraint} is already encoded in equation \eqref{SDE}. In fact, if $\{\Phi(t)\}_{t\geq 0}$ solves \eqref{SDE}, then
	\begin{align*}
		\partial_t\langle\langle \Phi,\mathcal{E}(1)\rangle\rangle=&\langle\langle \partial_t\Phi,\mathcal{E}(1)\rangle\rangle\\
		=&\langle\langle d\Gamma(-\mathcal{A})\Phi,\mathcal{E}(1)\rangle\rangle\\
		&+\frac{\lambda_d}{2}\langle\langle D^2_{1}\Phi,\mathcal{E}(1)\rangle\rangle-\frac{\lambda_d}{2}\langle\langle\mathtt{N}(\mathtt{N}-\mathtt{I})\Phi,\mathcal{E}(1)\rangle\rangle\\
		&+\langle\langle D^{\star}_{\lambda_c}\Phi,\mathcal{E}(1)\rangle\rangle-\gamma\langle\langle\Phi,\mathcal{E}(1)\rangle\rangle.
	\end{align*}
Now,
\begin{align*}
	\langle\langle d\Gamma(-\mathcal{A})\Phi,\mathcal{E}(1)\rangle\rangle&=\langle\langle\Phi,d\Gamma(-\mathcal{A})\mathcal{E}(1)\rangle\rangle=\langle\langle\Phi,\mathcal{E}(1)\diamond\delta(-\mathcal{A}1)\rangle\rangle=0,
\end{align*}
while
\begin{align*}
	\langle\langle D^2_{1}\Phi,\mathcal{E}(1)\rangle\rangle-\langle\langle\mathtt{N}(\mathtt{N}-\mathtt{I})\Phi,\mathcal{E}(1)\rangle\rangle=&\langle\langle \Phi,D^{\star}_{1}D^{\star}_{1}\mathcal{E}(1)\rangle\rangle-\langle\langle\Phi,\mathtt{N}(\mathtt{N}-\mathtt{I})\mathcal{E}(1)\rangle\rangle\\
	=&\langle\langle \Phi,\mathcal{E}(1)\diamond\delta(1)\diamond\delta(1)\rangle\rangle-\langle\langle\Phi,\mathtt{N}^2\mathcal{E}(1)-\mathtt{N}\mathcal{E}(1)\rangle\rangle\\
	=&\langle\langle \Phi,\mathcal{E}(1)\diamond\delta(1)\diamond\delta(1)\rangle\rangle-\langle\langle\Phi,\mathcal{E}(1)\diamond\delta(1)\diamond\delta(1)\rangle\rangle\\
	=&0.
\end{align*}
Here, in the last equality we utilized the identities
\begin{align*}
	\mathtt{N}\mathcal{E}(1)=\mathcal{E}(1)\diamond\delta(1)\quad\mbox{ and }\quad\mathtt{N}^2\mathcal{E}(1)=\mathcal{E}(1)\diamond\delta(1)\diamond\delta(1)+\mathcal{E}(1)\diamond\delta(1).
\end{align*}
Lastly,
\begin{align*}
	\langle\langle D^{\star}_{\lambda_c}\Phi,\mathcal{E}(1)\rangle\rangle-\gamma\langle\langle\Phi,\mathcal{E}(1)\rangle\rangle=&\langle\langle \Phi,D_{\lambda_c}\mathcal{E}(1)\rangle\rangle-\gamma\langle\langle\Phi,\mathcal{E}(1)\rangle\rangle\\
	=&\int_{[0,1]}\lambda_c(x)dx\cdot\langle\langle \Phi,\mathcal{E}(1)\rangle\rangle-\gamma\langle\langle\Phi,\mathcal{E}(1)\rangle\rangle\\
	=&0.
\end{align*}
This proves that $\partial_t\langle\langle \Phi(t),\mathcal{E}(1)\rangle\rangle=0$;  since $\Phi(0)=1$ (recall the initial condition in \eqref{SDE} which in turn follows from \eqref{initial}) we deduce that $\langle\langle \Phi(0),\mathcal{E}(1)\rangle\rangle=1$ and hence 
\begin{align}\label{dd}
\langle\langle \Phi(t),\mathcal{E}(1)\rangle\rangle=1,\quad\mbox{ for all $t\geq 0$}. 
\end{align}
On the other hand, by definition of dual pairing we can write
\begin{align*}
	\langle\langle \Phi(t),\mathcal{E}(1)\rangle\rangle=\sum_{n\geq 0}\int_{[0,1]^n}\rho_n(t,x_1,...,x_n)dx_1\cdot\cdot\cdot dx_n,
\end{align*}
which together with \eqref{dd} implies \eqref{constraint}. Notice that a similar calculation can be carried also when $\lambda_d$ is not constant; in this case the operator $\Phi\mapsto\mathtt{N}(\mathtt{N}-\mathtt{I})\Phi$ should be replaced with $\Phi\mapsto\delta^2(\lambda(\cdot,\cdot)D^2\Phi)$. This shows that condition \eqref{constraint} is part of equation \eqref{equation2} also in the absence of Assumption \ref{constant}.  
\end{remark}

The usefulness of transforming the CDME \eqref{equation2}-\eqref{initial} into the abstract problem \eqref{SDE} becomes apparent when we consider suitable finite dimensional projections of the stochastic process $\{\Phi(t)\}_{t\geq 0}$. To this aim, we set for $k\geq 1$
\begin{align}\label{xi}
\xi_k(x)=\sqrt{2}\cos((k-1)\pi x), x\in [0,1]\quad\mbox{ and }\quad\alpha_k=(k-1)^2\pi^2
\end{align}
to be the eigenfunctions with corresponding eigenvalues of the differential operator $-\mathcal{A}=\partial_x^2$ with homogeneous Neumann boundary conditions (as prescribed in \eqref{initial}). We also write $\Pi_N$ for the orthogonal projection onto the linear span of $\{\xi_1,...,\xi_N\}$.

\begin{lemma}\label{projection}
	If $\{\Phi(t)\}_{t\geq 0}$ solves \eqref{SDE} in $\mathtt{F}^{\star}$, then
	\begin{align}\label{Phi_N}
	\Phi_N(t):=\Gamma(\Pi_N)\Phi(t),\quad t\geq 0 
	\end{align}
	solves
	\begin{align}\label{SDE 2}
		\begin{cases}
			\partial_t\Phi_N=d\Gamma(-\mathcal{A})\Phi_N+\frac{\lambda_d}{2}D^2_{1}\Phi_N-\frac{\lambda_d}{2}\mathtt{N}(\mathtt{N}-\mathtt{I})\Phi_N+D^{\star}_{\Pi_N\lambda_c}\Phi_N-\gamma\Phi_N;\\
			\Phi_N(0)=1,
		\end{cases}
	\end{align}
	in $\mathtt{F}^{\star}$.
\end{lemma}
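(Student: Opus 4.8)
The plan is to derive equation \eqref{SDE 2} by applying the second quantization operator $\Gamma(\Pi_N)$ to equation \eqref{SDE} and using the commutation relations between $\Gamma(\Pi_N)$ and each of the operators appearing on the right hand side. First I would record what $\Gamma(\Pi_N)$ does at the level of chaos expansions: if $\Phi=\sum_n I_n(f_n)$ then $\Gamma(\Pi_N)\Phi=\sum_n I_n(\Pi_N^{\otimes n}f_n)$, so $\Gamma(\Pi_N)$ simply replaces each kernel by its projection onto the span of $\{\xi_1,\dots,\xi_N\}$ in every variable. Since equation \eqref{SDE} holds in $\mathtt{F}^{\star}$ and $\Gamma(\Pi_N)$ is a bounded (indeed contractive) operator on $\mathtt{F}^{\star}$ that commutes with $\partial_t$, applying it gives
\begin{align*}
\partial_t\Phi_N=\Gamma(\Pi_N)d\Gamma(-\mathcal{A})\Phi+\frac{\lambda_d}{2}\Gamma(\Pi_N)D^2_1\Phi-\frac{\lambda_d}{2}\Gamma(\Pi_N)\mathtt{N}(\mathtt{N}-\mathtt{I})\Phi+\Gamma(\Pi_N)D^{\star}_{\lambda_c}\Phi-\gamma\Gamma(\Pi_N)\Phi,
\end{align*}
with $\Gamma(\Pi_N)\Phi(0)=\Gamma(\Pi_N)1=1$. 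It then remains to push $\Gamma(\Pi_N)$ through each term.

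The key commutations are the following. For the number operator, $\mathtt{N}$ acts as multiplication by $n$ on the $n$-th chaos, so it commutes with $\Gamma(\Pi_N)$ trivially; hence $\Gamma(\Pi_N)\mathtt{N}(\mathtt{N}-\mathtt{I})\Phi=\mathtt{N}(\mathtt{N}-\mathtt{I})\Phi_N$, and likewise the scalar term gives $-\gamma\Phi_N$. For the diffusion generator $d\Gamma(-\mathcal{A})$: since the $\xi_k$ are precisely the eigenfunctions of $-\mathcal{A}$, the operator $-\mathcal{A}$ leaves the span of $\{\xi_1,\dots,\xi_N\}$ invariant and commutes with $\Pi_N$, which lifts to $\Gamma(\Pi_N)d\Gamma(-\mathcal{A})=d\Gamma(-\mathcal{A})\Gamma(\Pi_N)$; this is the one place where the specific choice \eqref{xi} is used. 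For the annihilation-type term, $D_1^2$ sends $I_{n+2}(f_{n+2})$ to $(n+2)(n+1)I_n(\langle f_{n+2},1\otimes 1\rangle)$ where the contraction is in the last two variables; projecting the resulting $n$-variable kernel is the same as first projecting all $n+2$ variables and then contracting the last two against $\Pi_N 1$, and since $1=\xi_1$ lies in the span, $\Pi_N 1=1$, so $\Gamma(\Pi_N)D_1^2\Phi=D_1^2\Gamma(\Pi_N)\Phi=D_1^2\Phi_N$. The one genuinely new feature is the creation term: $D^{\star}_{\lambda_c}$ symmetrically appends the function $\lambda_c$ as an extra variable, and projecting the output appends $\Pi_N\lambda_c$ instead while projecting the rest of the kernel; therefore $\Gamma(\Pi_N)D^{\star}_{\lambda_c}\Phi=D^{\star}_{\Pi_N\lambda_c}\Gamma(\Pi_N)\Phi=D^{\star}_{\Pi_N\lambda_c}\Phi_N$, which is exactly why $\lambda_c$ gets replaced by $\Pi_N\lambda_c$ in \eqref{SDE 2} while no other operator changes.

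Assembling these five identities turns the displayed equation into \eqref{SDE 2}, and the initial condition is immediate from $\Gamma(\Pi_N)1=1$. I would present the argument term by term in the order above, citing the Appendix for the definitions of $\Gamma$, $d\Gamma$, $D_1$, $D^{\star}_{\lambda_c}$, $\mathtt{N}$ and for the elementary facts $\Gamma(\Pi_N)I_n(f_n)=I_n(\Pi_N^{\otimes n}f_n)$ and $\Pi_N 1=1$. The main obstacle — really the only subtle point — is the creation term $D^{\star}_{\lambda_c}$: one must verify carefully that symmetrization and projection commute, i.e. that projecting the symmetrized kernel $\lambda_c\otimes_{\mathrm{sym}} f_{n-1}$ in all $n$ variables yields the symmetrization of $(\Pi_N\lambda_c)\otimes_{\mathrm{sym}}(\Pi_N^{\otimes(n-1)}f_{n-1})$; this follows because $\Pi_N^{\otimes n}$ commutes with the symmetrization operator (both act diagonally in the tensor basis $\xi_{i_1}\otimes\cdots\otimes\xi_{i_n}$), but it deserves an explicit line. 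Everything else is a routine bookkeeping of chaos expansions.
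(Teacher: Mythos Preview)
Your proposal is correct and follows the same overall strategy as the paper: apply $\Gamma(\Pi_N)$ to \eqref{SDE} and verify, term by term, that $\Gamma(\Pi_N)$ commutes with $d\Gamma(-\mathcal{A})$, $D_1^2$, $\mathtt{N}(\mathtt{N}-\mathtt{I})$, and intertwines $D^{\star}_{\lambda_c}$ with $D^{\star}_{\Pi_N\lambda_c}$. The key observations you isolate --- that $\Pi_N$ commutes with $-\mathcal{A}$ because the $\xi_k$ are its eigenfunctions, that $\Pi_N 1=1$ because $1=\xi_1$, that $\mathtt{N}$ is diagonal on the chaos decomposition, and that $\Pi_N^{\otimes n}$ commutes with symmetrization --- are exactly the ones the paper uses.

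The only methodological difference is how the commutation identities are checked. You work directly on the chaos kernels: e.g.\ $\Gamma(\Pi_N)d\Gamma(-\mathcal{A})I_n(f_n)=I_n\bigl(\Pi_N^{\otimes n}\sum_i(-\mathcal{A})_if_n\bigr)=I_n\bigl(\sum_i(-\mathcal{A})_i\Pi_N^{\otimes n}f_n\bigr)$. The paper instead tests each identity against stochastic exponentials $\mathcal{E}(h)$ via the dual pairing $\langle\langle\cdot,\cdot\rangle\rangle$, exploiting formulas such as $\Gamma(\Pi_N)\mathcal{E}(h)=\mathcal{E}(\Pi_Nh)$ and $d\Gamma(-\mathcal{A})\mathcal{E}(h)=\mathcal{E}(h)\diamond\delta(-\mathcal{A}h)$ from the Appendix. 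Your kernel computation is more elementary and self-contained; the paper's duality argument is more uniform (every term is handled by the same pairing trick) and sidesteps the need to track symmetrizations explicitly. Either route is perfectly adequate here.
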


\begin{proof}
Using \eqref{Phi_N} and \eqref{SDE} we can write
\begin{align}\label{proof projection}
	\partial_t\Phi_N&=\partial_t\Gamma(\Pi_N)\Phi=\Gamma(\Pi_N)\partial_t\Phi\nonumber\\
	&=\Gamma(\Pi_N)\left(d\Gamma(-\mathcal{A})\Phi+\frac{\lambda_d}{2}D^2_{1}\Phi-\frac{\lambda_d}{2}\mathtt{N}(\mathtt{N}-\mathtt{I})\Phi+D^{\star}_{\lambda_c}\Phi-\gamma\Phi\right).
\end{align}
The proof consists in showing that our assumptions allow for the commutation between the operator $\Gamma(\Pi_N)$ and each of the following: $d\Gamma(-\mathcal{A})$, $D^2_{1}$, $\mathtt{N}(\mathtt{N}-\mathtt{I})$ and $D^{\star}_{\lambda_d}$. Let us start with the commutation between $\Gamma(\Pi_N)$ and $d\Gamma(-\mathcal{A})$: for any smooth $h\in L^2([0,1])$ we have
\begin{align*}
	\langle\langle\Gamma(\Pi_N)d\Gamma(-\mathcal{A})\Phi,\mathcal{E}(h)\rangle\rangle=&	\langle\langle d\Gamma(-\mathcal{A})\Phi,\mathcal{E}(\Pi_N h)\rangle\rangle\\
	=&\langle\langle\Phi,d\Gamma(-\mathcal{A})\mathcal{E}(\Pi_N h)\rangle\rangle\\
	=&\langle\langle\Phi,\mathcal{E}(\Pi_N h)\diamond\delta(-\mathcal{A}\Pi_Nh)\rangle\rangle\\
	=&\langle\langle\Phi,\mathcal{E}(\Pi_N h)\diamond\delta(\Pi_N(-\mathcal{A})h)\rangle\rangle\\
	=&\langle\langle\Phi,\Gamma(\Pi_N)(\mathcal{E}(h)\diamond\delta(-\mathcal{A}h))\rangle\rangle\\
	=&\langle\langle\Gamma(\Pi_N)\Phi,\mathcal{E}(h)\diamond\delta(-\mathcal{A}h)\rangle\rangle\\
	=&\langle\langle d\Gamma(-\mathcal{A})\Gamma(\Pi_N)\Phi,\mathcal{E}(h)\rangle\rangle\\
	=&\langle\langle d\Gamma(-\mathcal{A})\Phi_N,\mathcal{E}(h)\rangle\rangle.
\end{align*}
Comparing the first and last members of this chain of equalities we deduce that
\begin{align*} 
\Gamma(\Pi_N)d\Gamma(-\mathcal{A})\Phi=d\Gamma(-\mathcal{A})\Phi_N,\quad\mbox{ in $\mathtt{F}^{\star}$.}
\end{align*}
It is important to observe how in the fourth equality above the commutation between $-\mathcal{A}$ and $\Pi_N$ is made possible by having chosen to project onto the space generated by the eigenfunctions of $-\mathcal{A}$.\\
We now study the commutation between $\Gamma(\Pi_N)$ and $D^2_{1}$: 
\begin{align*}
	\langle\langle\Gamma(\Pi_N)D^2_{1}\Phi,\mathcal{E}(h)\rangle\rangle=&\langle\langle D^2_{1}\Phi,\mathcal{E}(\Pi_Nh)\rangle\rangle\\
	=&\langle\langle D_1D_1\Phi,\mathcal{E}(\Pi_Nh)\rangle\rangle\\
	=&\langle\langle \Phi,D_1^{\star}D_1^{\star}\mathcal{E}(\Pi_Nh)\rangle\rangle\\
	=&\langle\langle \Phi,\mathcal{E}(\Pi_Nh)\diamond\delta(1)\diamond\delta(1)\rangle\rangle\\
	=&\langle\langle \Phi,\Gamma(\Pi_N)(\mathcal{E}(h)\diamond\delta(1)\diamond\delta(1))\rangle\rangle\\
	=&\langle\langle \Gamma(\Pi_N)\Phi,\mathcal{E}(h)\diamond\delta(1)\diamond\delta(1)\rangle\rangle\\
	=&\langle\langle D_1D_1\Gamma(\Pi_N)\Phi,\mathcal{E}(h)\rangle\rangle\\
	=&\langle\langle D^2_{1}\Gamma(\Pi_N)\Phi,\mathcal{E}(h)\rangle\rangle.
\end{align*}
In the fifth equality above we employed the identity $\Pi_N1=\Pi_11=1$ since the first eigenfunction of $-\mathcal{A}$ is precisely $1$. We therefore can conclude that
\begin{align*}
	\Gamma(\Pi_N)D^2_{1}\Phi= D^2_{1}\Phi_N,\quad\mbox{ in $\mathtt{F}^{\star}$.}
\end{align*}
We proceed with the commutation between $\Gamma(\Pi_N)$ and $\mathtt{N}(\mathtt{N}-\mathtt{I})$:
\begin{align*}
\langle\langle\Gamma(\Pi_N)\mathtt{N}(\mathtt{N}-\mathtt{I})\Phi,\mathcal{E}(h)\rangle\rangle&=\langle\langle\mathtt{N}(\mathtt{N}-\mathtt{I})\Phi,\mathcal{E}(\Pi_Nh)\rangle\rangle\\
&=\langle\langle\Phi,\mathcal{E}(\Pi_Nh)\diamond\delta(\Pi_Nh)\diamond\delta(\Pi_Nh)\rangle\rangle\\
&=\langle\langle\Phi,\Gamma(\Pi_N)(\mathcal{E}(h)\diamond\delta(h)\diamond\delta(h))\rangle\rangle\\
&=\langle\langle\Phi_N,\mathcal{E}(h)\diamond\delta(h)\diamond\delta(h)\rangle\rangle\\
&=\langle\langle\Phi_N,\mathtt{N}(\mathtt{N}-\mathtt{I})\mathcal{E}(h)\rangle\rangle\\
&=\langle\langle \mathtt{N}(\mathtt{N}-\mathtt{I})\Phi_N,\mathcal{E}(h)\rangle\rangle.
\end{align*}
This yields
\begin{align*}
	\Gamma(\Pi_N)\mathtt{N}(\mathtt{N}-\mathtt{I})\Phi= \mathtt{N}(\mathtt{N}-\mathtt{I})\Phi_N,\quad\mbox{ in $\mathtt{F}^{\star}$.}
\end{align*}
Lastly,
\begin{align*}
	\langle\langle\Gamma(\Pi_N)D_{\lambda_c}^{\star}\Phi,\mathcal{E}(h)\rangle\rangle=&\langle\langle D_{\lambda_c}^{\star}\Phi,\mathcal{E}(\Pi_Nh)\rangle\rangle\\
	=&\langle\langle \Phi,\mathcal{E}(\Pi_Nh)\rangle\rangle\int_0^1\Pi_Nh(x)\lambda_c(x)dx\\
	=&\langle\langle \Gamma(\Pi_N)\Phi,\mathcal{E}(h)\rangle\rangle\int_0^1h(x)\Pi_N\lambda_c(x)dx\\
	=&\langle\langle \Phi_N,D_{\Pi_N\lambda_c}\mathcal{E}(h)\rangle\rangle\\
	=&\langle\langle D_{\Pi_N\lambda_c}^{\star}\Phi_N,\mathcal{E}(h)\rangle\rangle,
\end{align*}	
and hence
\begin{align*}
	\Gamma(\Pi_N)D_{\lambda_c}^{\star}\Phi=D_{\Pi_N\lambda_c}^{\star}\Phi_N,\quad\mbox{ in $\mathtt{F}^{\star}$.}
\end{align*}
An implementation of all derived identities in \eqref{proof projection} leads directly to \eqref{SDE 2}.
\end{proof}

\begin{remark}\label{commutation bound}
	The previous lemma is a key ingredient of our method since it allows for finite dimensional projections of the solution to equation \eqref{SDE}. In particular, it is the possibility of commuting $d\Gamma(-\mathcal{A})$ and $\mathtt{N}(\mathtt{N}-\mathtt{I})$ with $\Gamma(\Pi_N)$ that implies the desired result. It is worth to mention that such possibility exists because of Assumption \ref{constant}, thus motivating this strong simplification. In fact, without such assumption we would not be able to commute $\Gamma(\Pi_N)$ with the operator $\Phi\mapsto\delta^2(\lambda(\cdot,\cdot)D^2\Phi)$ which is what one should work with, in the place of $\mathtt{N}(\mathtt{N}-\mathtt{I})$, for non constant $\lambda_d$. One may also wonder whether changing the projection space related to $\Pi_N$ could solve this issue (maybe defining a finite dimensional space described by the function $\lambda_d$): however, this modification would imply the loss of commutativity between $\Gamma(\Pi_N)$ and $d\Gamma(-\mathcal{A})$.
\end{remark}

\begin{lemma}\label{partial}
	For any $N\geq 1$ there exists a function $u_N:[0+\infty[\times\mathbb{R}^N$ such that
	\begin{align*}
		\Phi_N(t)=u_N(t,I_1(\xi_1),...,I_1(\xi_N)),\quad \mathbb{P}\mbox{-a.s.}
	\end{align*}
	Furthermore, the function $u_N$ solves (weakly) the following fourth order linear problem:
	\begin{align}\label{PDE}
		\begin{cases}
			\partial_tu_N(t,z)=-\sum_{k=1}^N\alpha_k\partial^{\star}_k\partial_ku_N(t,z)+\frac{\lambda_d}{2}\partial_1^2u_N(t,z)\\
			\quad\quad\quad\quad\quad-\frac{\lambda_d}{2}\sum_{j,k=1}^N\partial_j^{\star}\partial_k^{\star}\partial_j\partial_ku_N(t,z)\\
			\quad\quad\quad\quad\quad+\sum_{k=1}^N\mathtt{c}_k\partial_k^{\star}u_N(t,z)-\gamma u_N(t,z);\quad t>0,z\in\mathbb{R}^N;\\
			u_N(0,z)=1,\quad z\in\mathbb{R}^N.
		\end{cases}
	\end{align} 
Here, for any $k\in\{1,...,N\}$ the symbol $\partial_k$ is a shorthand notation for $\partial_{z_k}$ while $\partial^{\star}_k$ stands for the differential operator $-\partial_k+z_k$ (which is nothing else that the Gaussian divergence). Moreover, $\mathtt{c}_k:=\langle\lambda_c,\xi_k\rangle_{L^2([0,1])}$.
\end{lemma}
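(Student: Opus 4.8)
The plan is to realize $\Phi_N(t)$ inside the symmetric Fock space built over the finite‑dimensional subspace $V_N:=\mathrm{span}\{\xi_1,\dots,\xi_N\}\subset L^2([0,1])$ and to transport equation \eqref{SDE 2} across the Wiener--It\^o isomorphism between that Fock space and a space of (possibly generalized) functions on $\mathbb{R}^N$ equipped with the standard Gaussian measure $\gamma_N$; under this isomorphism the right‑hand side of \eqref{SDE 2} turns into the fourth‑order differential operator appearing in \eqref{PDE}.

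First I would record the representation. Since $\Phi_N(t)=\Gamma(\Pi_N)\Phi(t)$ and $\Gamma(\Pi_N)$ is the second quantization of the orthogonal projection onto $V_N$, it acts on the $n$‑th Wiener chaos as $\Pi_N^{\otimes n}$; hence every chaos kernel of $\Phi_N(t)$ lies in the symmetric tensor powers of $V_N$. By the standard correspondence between multiple integrals and products of Hermite polynomials, for every multi‑index $(m_1,\dots,m_N)$ with $m_1+\cdots+m_N=n$ the multiple integral of the symmetrization of $\xi_1^{\otimes m_1}\otimes\cdots\otimes\xi_N^{\otimes m_N}$ equals $\prod_{k=1}^N H_{m_k}(I_1(\xi_k))$, $H_m$ being the $m$‑th Hermite polynomial. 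Consequently every element of $\Gamma(\Pi_N)\mathtt{F}^{\star}$ is a function of the i.i.d.\ standard Gaussians $I_1(\xi_1),\dots,I_1(\xi_N)$, and summing the resulting Hermite series in its $t$‑dependent coefficients produces a function $u_N(t,\cdot)$ with $\Phi_N(t)=u_N(t,I_1(\xi_1),\dots,I_1(\xi_N))$, $\mathbb{P}$-a.s.; its $C^1$ dependence on $t$ is inherited from that of the kernels $\rho_n$ of the classical solution (Lemma \ref{infinite}) and from the fact that $\partial_t$ commutes with the multiple integrals.

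Next I would build the dictionary between the operators on the right‑hand side of \eqref{SDE 2} and differential operators acting on functions on $\mathbb{R}^N$. Under the isomorphism one has $I_1(\xi_k)\mapsto z_k$, the directional Malliavin derivative $D_{\xi_k}\mapsto\partial_k$, its dual $D_{\xi_k}^{\star}\mapsto\partial_k^{\star}=-\partial_k+z_k$, and the number operator $\mathtt{N}\mapsto\sum_{k=1}^N\partial_k^{\star}\partial_k$. Using these, together with $-\mathcal{A}\xi_k=-\alpha_k\xi_k$, the action of $\mathtt{N}$ on products of Hermite polynomials, the decomposition $\Pi_N\lambda_c=\sum_{k=1}^N\mathtt{c}_k\xi_k$ with $\mathtt{c}_k=\langle\lambda_c,\xi_k\rangle_{L^2([0,1])}$, and the canonical commutation relations $\partial_j\partial_k^{\star}-\partial_k^{\star}\partial_j=\delta_{jk}\mathtt{I}$, one checks
\begin{align*}
d\Gamma(-\mathcal{A})&\longmapsto-\sum_{k=1}^N\alpha_k\,\partial_k^{\star}\partial_k, & D_1^2&\longmapsto\partial_1^2,\\
\mathtt{N}(\mathtt{N}-\mathtt{I})&\longmapsto\sum_{j,k=1}^N\partial_j^{\star}\partial_k^{\star}\partial_j\partial_k, & D_{\Pi_N\lambda_c}^{\star}&\longmapsto\sum_{k=1}^N\mathtt{c}_k\,\partial_k^{\star},
\end{align*}
the identification $D_1\mapsto\partial_1$ being due to $D_1$ being the derivative along the constant function, which is the first Neumann eigenfunction $\xi_1$. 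Each identity is verified by pairing both sides against the exponential vectors $\mathcal{E}(\Pi_N h)$ — which correspond to $z\mapsto\exp(\sum_{k=1}^N h_k z_k-\tfrac12\sum_{k=1}^N h_k^2)$ with $h_k=\langle h,\xi_k\rangle_{L^2([0,1])}$ — precisely as in the proof of Lemma \ref{projection}. Substituting this dictionary into \eqref{SDE 2}, and using that $\partial_t$ passes through the isomorphism, the identity for $\Phi_N$ becomes \eqref{PDE} for $u_N$ in the weak sense, i.e.\ tested against the total family of exponential vectors, while $\Phi_N(0)=1$ becomes $u_N(0,\cdot)\equiv 1$.

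The hard part will be functional‑analytic rather than computational: one must pin down the correct pair of (nuclear) test and distribution spaces over $(\mathbb{R}^N,\gamma_N)$ — the Gaussian counterparts of $\mathtt{F}$ and $\mathtt{F}^{\star}$ — so that $u_N(t,\cdot)$ genuinely belongs to the distribution space, the four differential operators above act continuously on it, the Hermite series defining $u_N$ converges in its topology, and the term‑by‑term differentiation in $t$ together with the pointwise $\mathbb{P}$-almost sure evaluation are justified; without further a priori estimates on the $\rho_n$, $u_N(t,\cdot)$ need not be a bona fide $L^2(\gamma_N)$ function, and the statement has to be read in this generalized sense. The one genuinely combinatorial point is the bookkeeping of multiplicities when passing from multiple integrals of tensor products of the $\xi_k$ to products of Hermite polynomials: this is what produces both the eigenvalue weights $\alpha_k$ and the exact fourth‑order shape $\partial_j^{\star}\partial_k^{\star}\partial_j\partial_k$ (rather than $(\partial_j^{\star}\partial_j)(\partial_k^{\star}\partial_k)$) in \eqref{PDE}.
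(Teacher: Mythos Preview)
Your proposal is correct and follows essentially the same route as the paper: establish that $\Phi_N(t)$ is a function of $(I_1(\xi_1),\dots,I_1(\xi_N))$, then translate each operator in \eqref{SDE 2} into the corresponding differential operator on $\mathbb{R}^N$ via the dictionary $D_{\xi_k}\mapsto\partial_k$, $D_{\xi_k}^{\star}\mapsto\partial_k^{\star}$, $\mathtt{N}\mapsto\sum_k\partial_k^{\star}\partial_k$, using $-\mathcal{A}\xi_k=-\alpha_k\xi_k$ and the CCR to reduce $\mathtt{N}(\mathtt{N}-\mathtt{I})$ to $\sum_{j,k}\partial_j^{\star}\partial_k^{\star}\partial_j\partial_k$. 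The only presentational differences are that the paper obtains the existence of $u_N$ in one line via the identification of $\Gamma(\Pi_N)$ with a conditional expectation plus Doob's lemma (rather than your explicit Hermite series), and it verifies the dictionary by direct computation on $u_N(t,I_1(\xi_1),\dots,I_1(\xi_N))$ using the integration-by-parts formula for $\delta$ (rather than your pairing against exponential vectors); both are standard and equivalent.
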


\begin{proof}
It is well known (see for instance Theorem 4.9 in \cite{Janson}) that the second quantization operator $\Gamma(\Pi_N)$ corresponds to the conditional expectation with respect to the sigma-algebra generated by the random variables $I_1(\xi_1)$,..., $I_1(\xi_N)$; therefore, according to \eqref{Phi_N} we can write
\begin{align*}
	\Phi_N(t)=\Gamma(\Pi_N)\Phi(t)=\mathbb{E}[\Phi(t)|\sigma(I_1(\xi_1),..., I_1(\xi_N))]=u_N(t,I_1(\xi_1),...,I_1(\xi_N)).
\end{align*}  
Here, the function $u_N:[0+\infty[\times\mathbb{R}^N$ is measurable and its existence in guaranteed by Doob's lemma (see Lemma 1.13 in \cite{Kallenberg}). We now replace $\Phi_N(t)$ with $u_N(t,I_1(\xi_1),...,I_1(\xi_N))$ in \eqref{SDE 2} and decompose the Malliavin calculus's operators along the orthonormal bases $\{\xi_k\}_{k\geq 1}$. More precisely:
\begin{align*}
	d\Gamma(-\mathcal{A})[u_N(t,I_1(\xi_1),...,I_1(\xi_N))]=&\delta\left(-\mathcal{A}D[u_N(t,I_1(\xi_1),...,I_1(\xi_N))]\right)\\
	=&\delta\left(-\mathcal{A}\sum_{k\geq 1}D_{\xi_k}u_N(t,I_1(\xi_1),...,I_1(\xi_N))\xi_k\right)\\
	=&\delta\left(-\mathcal{A}\sum_{k=1}^N\partial_ku_N(t,I_1(\xi_1),...,I_1(\xi_N))\xi_k\right)\\
	=&\delta\left(-\sum_{k=1}^N\alpha_k\partial_ku_N(t,I_1(\xi_1),...,I_1(\xi_N))\xi_k\right)\\
	=&-\sum_{k=1}^N\alpha_k\partial_ku_N(t,I_1(\xi_1),...,I_1(\xi_N))I_1(\xi_k)\\
	&+\sum_{k=1}^N\alpha_k\partial_k^2u_N(t,I_1(\xi_1),...,I_1(\xi_N))\\
	=&-\sum_{k=1}^N\alpha_k\partial_k^{\star}\partial_ku_N(t,I_1(\xi_1),...,I_1(\xi_N));
\end{align*}	
Here, in the second-to-last equality we employed identity (1.56) from \cite{Nualart}. We proceed now with 
\begin{align*}
	D_1^2u_N(t,I_1(\xi_1),...,I_1(\xi_N))=\partial_1^2u_N(t,I_1(\xi_1),...,I_1(\xi_N)),
\end{align*}
and
\begin{align*}
	\mathtt{N}(\mathtt{N}-\mathtt{I})u_N(t,I_1(\xi_1),...,I_1(\xi_N))=&(\mathtt{N}^2-\mathtt{N})u_N(t,I_1(\xi_1),...,I_1(\xi_N))\\
	=&\sum_{j=1}^N\partial_j^{\star}\partial_j\left(\sum_{k=1}^N\partial_k^{\star}\partial_ku_N(t,I_1(\xi_1),...,I_1(\xi_N))\right)\\
	&-\sum_{k=1}^N\partial_k^{\star}\partial_ku_N(t,I_1(\xi_1),...,I_1(\xi_N))\\
	=&\sum_{j,k=1}^N\partial_j^{\star}\partial_k^{\star}\partial_j\partial_ku_N(t,I_1(\xi_1),...,I_1(\xi_N)).
\end{align*}
In the last equality we employed the commutation relation $\partial_j\partial_k^{\star}=\partial_k^{\star}\partial_j+\delta_{jk}\mathtt{I}$ where $\delta_{jk}$ stands for the Kronecker symbol. Lastly,
\begin{align*}
D^{\star}_{\Pi_N\lambda_c}u_N(t,I_1(\xi_1),...,I_1(\xi_N))=&\sum_{k\geq 1}D^{\star}_{\xi_k}u_N(t,I_1(\xi_1),...,I_1(\xi_N))\langle\Pi_N\lambda_c,\xi_k\rangle_{L^2([0,1])}\\
=&\sum_{k=1} ^N\partial^{\star}_ku_N(t,I_1(\xi_1),...,I_1(\xi_N))\langle\Pi_N\lambda_c,\xi_k\rangle_{L^2([0,1])}\\
=&\sum_{k=1} ^N\partial^{\star}_ku_N(t,I_1(\xi_1),...,I_1(\xi_N))\mathtt{c}_k.
\end{align*}
Collecting all identities derived above we see how equation \eqref{SDE 2} is equivalent to \eqref{PDE}.
\end{proof}

We are now ready to state the main result of the present section.
 
\begin{theorem}\label{main theorem}
	Let $\{\rho_n\}_{n\geq 0}$ be a classical solution to the CDME \eqref{equation2}-\eqref{initial}. Then, for any $N\geq 1$ we have the representation
	\begin{align}\label{formula}
		\Pi_N^{\otimes n}\rho_n(t,x_1,...,x_n)=\frac{1}{n!}\sum_{j_1,...,j_n=1}^N\mathbb{E}[(\partial_{j_1}\cdot\cdot\cdot\partial_{j_n}u_N)(t,I_1(\xi_1),...,I_1(\xi_n))]\xi_{j_1}(x_1)\cdot\cdot\cdot\xi_{j_n}(x_n),
	\end{align}
for all $n\geq 1$, $t\geq 0$, $(x_1,...,x_n)\in [0,1]^n$ and with $u_N$ solution to the Cauchy problem \eqref{PDE}.
\end{theorem}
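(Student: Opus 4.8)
The plan is to expand the projected process $\Phi_N(t)$ in Wiener--It\^o chaos in two different ways and to match the kernels. Since $\{\rho_n\}_{n\geq 0}$ is a classical solution, Lemma \ref{infinite} applies, so $\Phi(t)=\sum_{n\geq 0}I_n(\rho_n(t,\cdot))$ solves \eqref{SDE}. The second quantization operator acts on the $n$-th chaos as the $n$-th tensor power, that is $\Gamma(\Pi_N)I_n(g)=I_n(\Pi_N^{\otimes n}g)$, hence
\[
\Phi_N(t)=\Gamma(\Pi_N)\Phi(t)=\sum_{n\geq 0}I_n\big(\Pi_N^{\otimes n}\rho_n(t,\cdot)\big),
\]
so the symmetric $n$-th chaos kernel of $\Phi_N(t)$ is precisely the function $\Pi_N^{\otimes n}\rho_n(t,\cdot)$ we wish to represent. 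On the other hand, Lemma \ref{partial} identifies $\Phi_N(t)=u_N(t,I_1(\xi_1),\dots,I_1(\xi_N))$, i.e. $\Phi_N(t)$ is a smooth function of the vector of independent standard Gaussian random variables $(I_1(\xi_1),\dots,I_1(\xi_N))$.

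Next I would invoke Stroock's formula, which expresses the chaos kernels of a sufficiently Malliavin--differentiable random variable $F$ as $f_n=\frac{1}{n!}\mathbb{E}[D^nF]$, where $D$ denotes the $L^2([0,1])$-valued Malliavin derivative. By the chain rule and $DI_1(\xi_k)=\xi_k$, iterating $n$ times and using that the $\xi_k$ are deterministic, one obtains
\[
D^n\big(u_N(t,I_1(\xi_1),\dots,I_1(\xi_N))\big)=\sum_{j_1,\dots,j_n=1}^N(\partial_{j_1}\cdots\partial_{j_n}u_N)(t,I_1(\xi_1),\dots,I_1(\xi_N))\,\xi_{j_1}\otimes\cdots\otimes\xi_{j_n}.
\]
Taking expectations and dividing by $n!$ yields exactly the right-hand side of \eqref{formula} as the $n$-th chaos kernel of $\Phi_N(t)$.

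It then remains to match the two expressions. Both candidate kernels are symmetric elements of $L^2([0,1]^n)$: the first because $\rho_n(t,\cdot)$ is symmetric by Assumption \ref{density} and $\Pi_N^{\otimes n}$ preserves symmetry, the second because mixed partial derivatives of the smooth function $u_N$ commute. Being the $n$-th chaos kernel of one and the same element $\Phi_N(t)$, by uniqueness of the Wiener--It\^o chaos decomposition they must coincide, which is precisely \eqref{formula}.

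The delicate point is the applicability of Stroock's formula in the second step. A priori $\Phi(t)$, and hence $\Phi_N(t)$, is only guaranteed to live in the distribution space $\mathtt{F}^{\star}$, so one must verify that projecting onto $N$ modes actually places $\Phi_N(t)$ in the domain where the identity $F=\sum_{n\geq 0}I_n(\tfrac{1}{n!}\mathbb{E}[D^nF])$ is valid; equivalently, that $u_N(t,\cdot)$ is smooth with all partial derivatives possessing finite Gaussian moments and that the resulting chaos series converges. One expects such bounds --- possibly only in a weighted sense --- from the structure of \eqref{PDE} and the mass constraint \eqref{constraint}; establishing them rigorously, or alternatively restating Stroock's formula directly within the topology of $\mathtt{F}^{\star}$ and justifying the computation of $D^n$ there, is the main obstacle.
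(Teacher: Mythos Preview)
Your proposal is correct and follows the same route as the paper: compute the chaos kernels of $\Phi_N(t)$ once as $\Pi_N^{\otimes n}\rho_n$ via the action of $\Gamma(\Pi_N)$, and once via the Stroock--Taylor formula applied to $u_N(t,I_1(\xi_1),\dots,I_1(\xi_N))$ from Lemma~\ref{partial}, then match by uniqueness of the chaos decomposition. Your concern about the applicability of Stroock's formula in $\mathtt{F}^{\star}$ is not an obstacle here: the paper's Appendix records the generalized Stroock--Taylor identity $h_n=\tfrac{1}{n!}\mathbb{E}[D_{x_1}\cdots D_{x_n}T]$ as valid for any $T\in\mathtt{F}^{\star}$ (precisely your suggested alternative of restating the formula in that topology), where the Malliavin derivative acts term-by-term on the chaos expansion and the identity is essentially definitional, so no additional regularity of $u_N$ is required.
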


\begin{proof}
If $\{\rho_n\}_{n\geq 0}$ is a classical solution to the CDME \eqref{equation2}-\eqref{initial}, then according to Lemma \ref{infinite} the stochastic process $\{\Phi(t)\}_{t\geq 0}$ defined in \eqref{Phi} solves equation \eqref{SDE} in $\mathtt{F}^{\star}$. Moreover, Lemma \ref{projection} shows that the finite dimensional projection of $\{\Phi(t)\}_{t\geq 0}$ introduced in \eqref{Phi_N} solves the auxiliary problem \eqref{SDE 2}. Notice that by construction the kernels of the Wiener It\^o chaos expansion of $\{\Phi_N(t)\}_{t\geq 0}$ are $\{\Pi_N^{\otimes}\rho_n\}_{n\geq 0}$ (since this is the action of $\Gamma(\Pi_N)$ on $\{\Phi(t)\}_{t\geq 0}$).   \\
On the other hand, according to the Stroock-Taylor formula (see Exercise 1.2.6 in \cite{Nualart}) the Wiener It\^o chaos expansion of $\{\Phi_N(t)\}_{t\geq 0}$ can also be represented as 
\begin{align}\label{ww}
	\Pi_N^{\otimes}\rho_n(t,x_1,...,x_n)=\frac{1}{n!}\mathbb{E}[D_{x_1}\cdot\cdot\cdot D_{x_n}\Phi_N(t)].
\end{align} 
From Lemma \ref{partial} the process $\{\Phi_N(t)\}_{t\geq 0}$ can be written as $\{u_N(t,I_1(\xi_1),...,I_1(\xi_n))\}_{t\geq 0}$ where $u_N$ solution to the Cauchy problem \eqref{PDE}. Therefore, substituting this into \eqref{ww} and computing the Malliavin derivatives yields
\begin{align*}
	\Pi_N^{\otimes}\rho_n(t,x_1,...,x_n)=&\frac{1}{n!}\mathbb{E}[D_{x_1}\cdot\cdot\cdot D_{x_n}u_N(t,I_1(\xi_1),...,I_1(\xi_n))]\\
	=&\frac{1}{n!}\sum_{j_1,...,j_n=1}^N\mathbb{E}[(\partial_{j_1}\cdot\cdot\cdot\partial_{j_n}u_N)(t,I_1(\xi_1),...,I_1(\xi_n))]\xi_{j_1}(x_1)\cdot\cdot\cdot\xi_{j_n}(x_n),
\end{align*}
which is the formula we wanted to prove.
\end{proof}

\subsection{Some comments on the PDE \eqref{PDE}}

The generalization of the moment generating function method utilized in this section has introduced some new Gaussian features to the original problem \eqref{equation2}-\eqref{initial}. At a formal level, the infinite dimensional nature of the system of Fokker-Planck equations under investigation combined with the Fock space structure of the sequence $\{\rho_n\}_{n\geq 0}$ leads naturally to the use of Gaussian stochastic analysis's techniques. \\
We now try to rewrite the representation formula \eqref{formula} in a Gaussian-free manner. To this aim, we present the following technical result.

\begin{lemma}\label{lemma GS}
	Let $f\in C^1(\mathbb{R}^N)$ be, together with all its first order partial derivatives, polynomially bounded at infinity. Then, setting
	\begin{align*}
		\tilde{f}(z):=\int_{\mathbb{R}^N}f(y)(2\pi)^{-N/2}e^{-|z-y|^2/2}dy,\quad z\in\mathbb{R}^N,
	\end{align*}   
we have for all $k\in\{1,...,N\}$ and $z\in\mathbb{R}^N$ that
\begin{align}\label{G to S}
	\widetilde{\partial_k f}(z)=\partial_k\tilde{f}(z)\quad\mbox{ and }\quad \widetilde{\partial_k^{\star} f}(z)=z_k\tilde{f}(z)
\end{align}
\end{lemma}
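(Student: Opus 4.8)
The statement asserts that the Gaussian-smoothing (heat-semigroup-at-time-one) operator $f \mapsto \tilde f$ intertwines the Gaussian gradient $\partial_k$ with itself and the Gaussian divergence $\partial_k^\star = -\partial_k + z_k$ with the multiplication operator $f \mapsto z_k f$. The plan is to verify each identity in \eqref{G to S} directly by differentiating under the integral sign, which is justified by the assumed polynomial growth of $f$ and its first derivatives against the rapidly decaying Gaussian kernel. Write $\varphi(z) := (2\pi)^{-N/2} e^{-|z|^2/2}$ for the standard Gaussian density, so that $\tilde f = f * \varphi$ in the convolution sense.

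For the first identity, since $f \in C^1$ with polynomially bounded partials, the dominated convergence theorem permits moving $\partial_k$ inside the integral: writing $\tilde f(z) = \int_{\mathbb{R}^N} f(y)\varphi(z-y)\,dy = \int_{\mathbb{R}^N} f(z-w)\varphi(w)\,dw$ and differentiating the second representation gives $\partial_k \tilde f(z) = \int_{\mathbb{R}^N} (\partial_k f)(z-w)\varphi(w)\,dw = \widetilde{\partial_k f}(z)$. This is routine and the dominating function is a product of a polynomial and the Gaussian, which is integrable.

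For the second identity, the cleanest route is to use the first representation $\tilde f(z) = \int f(y)\varphi(z-y)\,dy$ and compute $\widetilde{\partial_k^\star f}(z) = \int (\partial_k^\star f)(y)\varphi(z-y)\,dy = \int \bigl(-(\partial_k f)(y) + y_k f(y)\bigr)\varphi(z-y)\,dy$. In the term $-\int (\partial_k f)(y)\varphi(z-y)\,dy$, integrate by parts in $y_k$ (no boundary term, by the polynomial-times-Gaussian decay): this equals $\int f(y)\,\partial_{y_k}\varphi(z-y)\,dy = -\int f(y)\,(\partial_k\varphi)(z-y)\,dy$. Now use the elementary identity $(\partial_k\varphi)(u) = -u_k\varphi(u)$, so with $u = z-y$ this term becomes $\int f(y)(z_k - y_k)\varphi(z-y)\,dy$. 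Adding the remaining term $\int y_k f(y)\varphi(z-y)\,dy$, the $y_k$ contributions cancel and we are left with $z_k \int f(y)\varphi(z-y)\,dy = z_k \tilde f(z)$, as claimed.

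The only subtlety — and the "obstacle," though a mild one — is bookkeeping on the growth conditions: one must check that $\partial_k^\star f$ (which involves the extra factor $z_k$) is still polynomially bounded so that $\widetilde{\partial_k^\star f}$ is well-defined, and that the integration by parts genuinely has vanishing boundary contributions at infinity. Both follow immediately because a polynomial times the Gaussian kernel is integrable and decays to zero; no uniformity or delicate estimate is required. The computation is otherwise entirely elementary.
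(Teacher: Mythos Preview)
Your proof is correct and is exactly the ``direct verification'' that the paper invokes without spelling out: you differentiate under the integral (justified by the polynomial bound against the Gaussian) for the first identity, and for the second you integrate by parts and use $(\partial_k\varphi)(u)=-u_k\varphi(u)$ to see the cancellation of the $y_k$-terms. There is nothing to add; your discussion of the growth conditions and the vanishing boundary term is the only point requiring comment, and you handle it correctly.
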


\begin{proof}
	It is a direct verification.
\end{proof}
		
\begin{proposition}
	If $u_N$ solves the PDE \eqref{PDE}, then $v_N:=\tilde{u}_N$ solves 
\begin{align}\label{PDE 2}
		\begin{cases}
			\partial_tv_N(t,z)=-\sum_{k=1}^N\alpha_kz_k\partial_kv_N(t,z)+\frac{\lambda_d}{2}\partial_1^2v_N(t,z)\\
			\quad\quad\quad\quad\quad-\frac{\lambda_d}{2}\sum_{j,k=1}^Nz_jz_k\partial_j\partial_kv_N(t,z)\\
			\quad\quad\quad\quad\quad+\sum_{k=1}^N\mathtt{c}_kz_kv_N(t,z)-\gamma v_N(t,z);\quad t>0,z\in\mathbb{R}^N;\\
			v_N(0,z)=1,\quad z\in\mathbb{R}^N.
		\end{cases}
	\end{align} 
\end{proposition}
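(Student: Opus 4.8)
The plan is to transform the PDE \eqref{PDE} satisfied by $u_N$ into the PDE \eqref{PDE 2} by applying the Gaussian smoothing operator $f\mapsto\tilde{f}$ term by term, using the conversion rules from Lemma \ref{lemma GS}. First I would observe that the map $f\mapsto\tilde{f}$ commutes with $\partial_t$ (since the Gaussian kernel does not depend on $t$ and one may differentiate under the integral sign thanks to the polynomial growth hypotheses), so that $\partial_t v_N=\widetilde{\partial_t u_N}$. Then I would apply the tilde to both sides of \eqref{PDE} and rewrite each of the five terms on the right-hand side.

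The key step is the repeated use of the two identities in \eqref{G to S}: $\widetilde{\partial_k f}=\partial_k\tilde f$ and $\widetilde{\partial_k^{\star}f}=z_k\tilde f$. For the diffusion term, $\widetilde{-\alpha_k\partial_k^{\star}\partial_k u_N}=-\alpha_k z_k\,\widetilde{\partial_k u_N}=-\alpha_k z_k\partial_k v_N$. For the pure second-derivative term, $\widetilde{\partial_1^2 u_N}=\partial_1^2 v_N$. For the fourth-order term, one peels off the operators from the outside in: $\widetilde{\partial_j^{\star}\partial_k^{\star}\partial_j\partial_k u_N}=z_j\,\widetilde{\partial_k^{\star}\partial_j\partial_k u_N}=z_jz_k\,\widetilde{\partial_j\partial_k u_N}=z_jz_k\partial_j\partial_k v_N$. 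For the creation term, $\widetilde{\mathtt{c}_k\partial_k^{\star}u_N}=\mathtt{c}_k z_k v_N$, and the zeroth-order term $-\gamma u_N$ maps to $-\gamma v_N$. Summing these gives exactly \eqref{PDE 2}. The only point requiring a small amount of care is that in applying $\widetilde{\partial_k^{\star}(\cdot)}=z_k\,\widetilde{(\cdot)}$ one must ensure the inner function $\partial_k\partial_j\partial_k u_N$ (resp. the intermediate objects) still satisfies the $C^1$ and polynomial-growth hypotheses of Lemma \ref{lemma GS}; under the implicit smoothness assumptions on $u_N$ coming from \eqref{PDE}, iterating Lemma \ref{lemma GS} as above is legitimate.

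Finally I would handle the initial condition: since $u_N(0,\cdot)\equiv 1$, we have $v_N(0,z)=\tilde 1(z)=\int_{\mathbb{R}^N}(2\pi)^{-N/2}e^{-|z-y|^2/2}dy=1$, so the initial datum is preserved. The main obstacle — really the only one — is bookkeeping: making sure the nesting of the $\partial_k^{\star}$ and $\partial_k$ operators is resolved in the correct order when pulling the tilde through, and checking the regularity/growth conditions are propagated at each stage so that Lemma \ref{lemma GS} genuinely applies to every intermediate function. None of this is deep, so the proof can be written quite compactly as a term-by-term verification.
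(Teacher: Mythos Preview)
Your proposal is correct and follows exactly the same approach as the paper, which simply states that the result ``follows immediately from \eqref{G to S}.'' You have merely spelled out in detail the term-by-term verification that the paper leaves implicit.
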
		
		
\begin{proof}
	Follows immediately from \eqref{G to S}.
\end{proof}		

The PDE \eqref{PDE 2} represents a version of \eqref{PDE} in which the Gaussian features inherited from our approach have been removed. Equation \eqref{PDE 2} has certainly the advantage over \eqref{PDE} of being of second order (contrary to the fourth order of the latter); moreover, if we consider the case $N=1$ and remember that $\xi_1\equiv 1$ and $\alpha_1=0$ we obtain
\begin{align}\label{book}
	\begin{cases}
		\partial_tv_1(t,z)=\frac{\lambda_d}{2}(1-z_1^2)\partial_1^2v_1(t,z)
+\gamma(z_1-1)v_1(t,z);\quad t>0,z\in\mathbb{R};\\
		v_1(0,z)=1,\quad z\in\mathbb{R}.
	\end{cases}
\end{align} 
This is exactly the equation you obtain via the classical moment generating function method applied to the CME \eqref{equation book}, which is the diffusion-free analogue of our system \eqref{equation2}-\eqref{initial}. See \cite{EC} for a detailed study of \eqref{book}. Therefore, from this point of view equation \eqref{PDE 2} is the natural extension of \eqref{book} to a model that includes diffusion of the particles.\\	
Even though equation \eqref{PDE 2} possesses some desirable properties, its investigation from both analytical and numerical points of view presents some important obstacles. First of all, if we use the function $v_N$ solution to \eqref{PDE 2} in the place of $u_N$ solution to \eqref{PDE}, then the representation formula \eqref{formula} takes the form
 \begin{align}\label{formula}
 	\Pi_N^{\otimes n}\rho_n(t,x_1,...,x_n)=\frac{1}{n!}\sum_{j_1,...,j_n=1}^N(\partial_{j_1}\cdot\cdot\cdot\partial_{j_n}v_N)(t,0,...,0)\xi_{j_1}(x_1)\cdot\cdot\cdot\xi_{j_n}(x_n),
 \end{align}
as it follows immediately by the definition of $v_N$ and Lemma \ref{lemma GS}. This means that the natural domain for solving \eqref{PDE 2} would be a neighborhood of the origin instead of the whole space; this can be seen already in the case $N=1$, i.e. equation \eqref{book}, where $z_1$ should be taken in $[-1,1]$ in order to avoid a sign change in the leading second order term. However, it is very hard to find a reasonable argument for assigning a boundary value to the problem \eqref{PDE 2} (this issue is also discussed in \cite{McQuarrie}).\\
A second main difficulty in analyzing equation \eqref{PDE 2} is due to its intrinsic ill-posedness. In fact, if for simplicity we take $N=2$ and focus on the second order (i.e. leading) term of the differential operator appearing in the right hand side of \eqref{PDE 2}, we see that the matrix describing its coefficients is a multiple of 
	\begin{align*}
		\begin{split} 
			A(z_1,z_2)=\begin{bmatrix} 1-z_1^2 & -z_1z_2 \\ -z_1z_2 & -z_2^2 
			\end{bmatrix}.
		\end{split}
	\end{align*}
Checking the positive semi-definiteness of the matrix $A$ (recall that where have an initial condition for solving equation \eqref{PDE 2}) we see that
\begin{align*}
	\langle A(z_1,z_2)\theta,\theta\rangle=&(1-z_1^2)\theta_1^2-2z_1z_2\theta_1\theta_2-z_2^2\theta_2^2\\
	=&\theta_1^2-(z_1\theta_1+z_2\theta_2)^2
\end{align*}
  and the last quantity cannot be non negative for any choice of $(\theta_1,\theta_2)\in\mathbb{R}^2$ unless $z_2=0$ (to see this take $\theta_1=0$). Therefore, there is no open neighborhood of the origin for the space variable $z$  where the matrix $A$ is positive semi-definiteness. This entails the ill-posedness of the PDE \eqref{PDE 2}.
  
  The discussion presented above highlights some potential advantages in embedding the CDME \eqref{equation2}-\eqref{initial} into the Gaussian framework utilized in this section for deriving the representation formula \eqref{formula}. 

\section*{Appendix}		

In this section we collect some definitions and formulas utilized in the proofs of Section 3. For more details on the subject we refer the reader to one the books \cite{Janson} and \cite{Nualart}.

\subsection*{Wiener chaos and spaces of random  variables}
Let $(\Omega,\mathcal{B},\mathbb{P})$ be the classical Wiener space over the interval $[0,1]$. We denote by 
\begin{align*}
	\begin{split}
		B_x&:\Omega\to\mathbb{R}\\
		&\quad\omega\mapsto B_x(\omega):=\omega(x),\quad x\in [0,1],
	\end{split}
\end{align*} 
the coordinate process which by construction is a one dimensional Brownian motion under $\mathbb{P}$. According to the Wiener-It\^o chaos expansion theorem, any random variable $\Phi$ in $\mathbb{L}^2(\Omega)$ can be uniquely represented as 
\begin{align*}
	\Phi=\sum_{n\geq 0}I_n(h_n),
\end{align*}  
where for $n\geq 1$, $I_n(h_n)$ stands for the $n$-th order multiple It\^o integral defined as 
\begin{align*}
	I_n(h_n):=n!\int_0^1\int_0^{x_1}\cdot\cdot\cdot\int_0^{x_{n-1}}h_n(x_1,...,x_n)dB_{x_n}\cdot\cdot\cdot dB_{x_2}dB_{x_1}.
\end{align*}
	Two notable dense subset of $\mathbb{L}^2(\Omega)$ are
	\begin{align*}
		\mathtt{F}:=\left\{\sum_{n=0}^MI_n(h_n),\mbox{ for some $M\in\mathbb{N}\cup\{0\}$, $h_0\in\mathbb{R}$ and $h_n\in L_s^2([0,1]^n)$, $n=1,...,M$}\right\}, 
	\end{align*}
	which collects the random variables with a finite order chaos expansion, and 
	\begin{align*}
		\mathtt{E}:=\left\{\mathcal{E}(f):=\sum_{n\geq 0}I_n\left(\frac{f^{\otimes n}}{n!}\right),\mbox{ for some $f\in L^2([0,1])$}\right\},
	\end{align*}
	which is the family of the so-called \emph{stochastic exponentials}.
	
	\subsection*{Malliavin derivative and its adjoint}
	
	The \emph{Malliavin derivative} of $\Phi=\sum_{n=0}^MI_n(h_n)\in\mathtt{F}$, denoted $\{D_x\Phi\}_{x\in[0,1]}$, is the element of $L^2([0,1];\mathtt{F})$ defined by
	\begin{align*}
		D_x\Phi:=\sum_{n=0}^{M-1}(n+1)I_n(h_{n+1}(\cdot,x)), \quad x\in [0,1].
	\end{align*}
	For $l\in L^2([0,1])$ and $\Phi=\sum_{n=0}^MI_n(h_n)\in\mathtt{F}$, we also write
	\begin{align*}
		\begin{split}
			D_l\Phi:=\langle D\Phi,l\rangle_{L^2([0,1])}&=\sum_{n=0}^{M-1}(n+1)I_{n}\left(\int_0^1h_{n+1}(\cdot,y)l(y)dy\right)\\
			&=\sum_{n=0}^{M-1}(n+1)I_{n}\left(h_{n+1}\otimes_1 l\right)
		\end{split}
	\end{align*}
	for the \emph{directional Malliavin derivative} of $\Phi$ along $l$. Here, we denote the \emph{$r$-th order contraction} of $h_n$ and $h_m$ by $h_n\otimes_r h_m$, i.e.
	\begin{align*}
		\begin{split}
			&(h_n\otimes_r h_m)(x_1,....,x_{n+m-2r})\\
			&:=\int_{[0,1]^r}h_n(x_1,...,x_{n-r},y_1,...,y_r)h_m(y_1,...,y_r,x_{n-r+1},...,x_{n+m-2r})dy_1\cdot\cdot\cdot dy_r.
		\end{split}
	\end{align*}
We have:
\begin{align*}
	D_x\mathcal{E}(f)=f(x)\mathcal{E}(f), x\in [0,1]\quad\mbox{ and }\quad D_l\mathcal{E}(f)=\langle f,l\rangle_{L^2([0,1])}\mathcal{E}(f).
\end{align*} 
If we now take $l\in L^2([0,1])$, $\Phi=\sum_{n=0}^MI_n(h_n)\in\mathtt{F}$ and $\Psi=\sum_{n=0}^KI_n(g_n)\in\mathtt{F}$, we can write
\begin{align*}
	\mathbb{E}[D_l\Phi\cdot\Psi]=\mathbb{E}[\Phi\cdot D_l^{\star}\Psi],
\end{align*}
where
\begin{align*}
	D^{\star}_l\Psi:=\sum_{n=1}^{K+1}I_n(l\hat{\otimes}g_{n-1})
\end{align*}
and
\begin{align*}
	(l\hat{\otimes}g_{n-1})(x_1,...,x_n):=\frac{1}{n}\sum_{i=1}^nf(x_i)g_{n-1}(x_1,...,x_{i-1},x_{i+1},...,x_n).
\end{align*}
	The following identity holds:
	\begin{align*}
		D_l^{\star}\Psi+D_l\Psi=\Psi\cdot I_1(l).
	\end{align*}
 One can also introduce the adjoint of $D_x$, denoted $\delta$: 
\begin{align*}
	\delta(\Phi(\cdot)):=\sum_{n=0}^MI_{n+1}(\tilde{h}_n)\in\mathtt{F},
\end{align*}
where $\tilde{h}_n$ stands for the symmetrization of $h_n$ with respect to the $n+1$ variables $x_1,...,x_n,x$. We mentioned that $D^{\star}_l\Psi$ it is sometimes written as $\Phi\diamond\delta(l)$.

\subsection*{Second quantization operators}

Let $A:L^2([0,1])\to L^2([0,1])$ be a bounded linear operator; for $\Phi=\sum_{n=0}^MI_n(h_n)\in\mathtt{F}$ we define the \emph{second quantization operator} of $A$ as
\begin{align*}
	\Gamma(A)\Phi:=\sum_{n=0}^MI_n\left(A^{\otimes n}h_n\right),
\end{align*}
and the \emph{differential second quantization operator} of $A$ as
\begin{align*}
	d\Gamma(A)\Phi:=\sum_{n=1}^MI_n\left(\sum_{i=1}^nA_ih_n\right),
\end{align*}
where $A_i$ stands for the operator $A$ acting on the $i$-th variable of $h_n$.  Notice in addition that for $A$ being the identity, we recover from $d\Gamma(A)$ the well known \emph{number operator}:
\begin{align*}
	\mathtt{N}\Phi=\sum_{n=1}^MnI_n\left(h_n\right).
\end{align*}
The following identities hold true:
\begin{align*}
	\begin{split}
		&\mathbb{E}[\Gamma(A)\Phi]=\mathbb{E}[\Phi];\quad \mathbb{E}[d\Gamma(A)\Phi]=0;\\
		&\mathbb{E}[\Gamma(A)\Phi\cdot\Psi]=\mathbb{E}[\Phi\cdot\Gamma(A^{\star})\Psi];\quad\mathbb{E}[d\Gamma(A)\Phi\cdot\Psi]=\mathbb{E}[\Phi\cdot d\Gamma(A^{\star})\Psi];\\
		&\Gamma(A)\mathcal{E}(f)=\mathcal{E}(Af);\quad d\Gamma(A)\mathcal{E}(f)=D^{\star}_{Af}\mathcal{E}(f);\quad d\Gamma(A)\Phi=\delta\left(AD_{\cdot}\Phi\right).
		\end{split}
\end{align*} 

\subsection*{The space  $\mathtt{F}^{\star}$}

Let  
\begin{align*}
	\mathtt{F}^{\star}:=\left\{\sum_{n\geq 0}I_n(h_n),\mbox{ for some $h_0\in\mathbb{R}$ and $h_n\in L_s^2([0,1]^n)$, $n\geq 1$}\right\} 
\end{align*}
be a family of \emph{generalized} random variables. The action of $T=\sum_{n\geq 0}I_n(h_n)\in\mathtt{F}^{\star}$ on $\varphi=\sum_{n=0}^MI_n(g_n)\in\mathtt{F}$ is defined as 
\begin{align*}
	\langle\langle T,\varphi\rangle\rangle:=\sum_{n=0}^Mn!\langle h_n,g_n\rangle_{L^2([0,1]^n)}.
\end{align*}  
By construction, we have the inclusions
\begin{align*}
	\mathtt{F}\subset\mathbb{L}^2(\Omega)\subset\mathtt{F}^{\star}
\end{align*}
with
\begin{align*}
	\langle\langle T,\varphi\rangle\rangle=\mathbb{E}[T\varphi],
\end{align*}
whenever $T\in\mathbb{L}^2(\Omega)$. We will say that $T=U$ in $\mathtt{F}^{\star}$ if
\begin{align*}
	\langle\langle T,\varphi\rangle\rangle=\langle\langle U,\varphi\rangle\rangle, \quad \mbox{for all }\varphi\in\mathtt{F}.
\end{align*}
 Let  
\begin{align*}
	\mathtt{F}^{\star}:=\left\{\sum_{n\geq 0}I_n(h_n),\mbox{ for some $h_0\in\mathbb{R}$ and $h_n\in L_s^2([0,1]^n)$, $n\geq 1$}\right\} 
\end{align*}
be a family of \emph{generalized} random variables. The action of $T=\sum_{n\geq 0}I_n(h_n)\in\mathtt{F}^{\star}$ on $\varphi=\sum_{n=0}^MI_n(g_n)\in\mathtt{F}$ is defined as 
\begin{align*}
	\langle\langle T,\varphi\rangle\rangle:=\sum_{n=0}^Mn!\langle h_n,g_n\rangle_{L^2([0,1]^n)}.
\end{align*}  
By construction, we have the inclusions
\begin{align*}
	\mathtt{F}\subset\mathbb{L}^2(\Omega)\subset\mathtt{F}^{\star}
\end{align*}
with
\begin{align*}
	\langle\langle T,\varphi\rangle\rangle=\mathbb{E}[T\varphi],
\end{align*}
whenever $T\in\mathbb{L}^2(\Omega)$. We will say that $T=U$ in $\mathtt{F}^{\star}$ if
\begin{align*}
	\langle\langle T,\varphi\rangle\rangle=\langle\langle U,\varphi\rangle\rangle, \quad \mbox{for all }\varphi\in\mathtt{F}.
\end{align*}
Lastly, we recall a generalized version of the so-called Stroock-Taylor formula: if $T=\sum_{n\geq 0}I_n(h_n)\in\mathtt{F}^{\star}$, then
\begin{align*}
	h_n(x_1,...,x_n)=\frac{1}{n!}\mathbb{E}[D_{x_1}...D_{x_n}T], \quad (x_1,...,x_n)\in [0,1]^n.
\end{align*}
	
\bibliographystyle{apalike}
\bibliography{mutual_annihilation}

\end{document}